\newtheorem{thm}{Theorem}[section]
\newtheorem{prop}[thm]{Proposition}
\newtheorem{lemma}[thm]{Lemma}
\newtheorem{conj}{Conjecture}
\newtheorem*{thm*}{Theorem}
\newtheorem{rem}{Remark}
\newcommand{\h}{\tfrac{1}{2}}
\newcommand{\R}{{\mathbb R}}
\begin{document}

\title{On the Correlation of Shifted values of the Riemann Zeta Function}

\author{Vorrapan Chandee}
\email{vchandee@stanford.edu}

\address{Department of Mathematics, Stanford University, Stanford, CA 94305}

\date{\today}

\begin{abstract}
In 2007, assuming the Riemann Hypothesis (RH), Soundararajan \cite{Moment} proved that $
\int_{0}^T |\zeta(1/2 + it)|^{2k} \> dt \ll_{k, \epsilon} T(\log T)^{k^2 + \epsilon}$
for every $k$ positive real number and every $\epsilon > 0.$ In this paper I generalized his methods to find upper bounds for shifted moments. We also obtained their lower bounds and conjectured asymptotic formulas based on Random matrix model, which is analogous to Keating and Snaith's work. These upper and lower bounds suggest that the correlation of $|\zeta(\h + it + i\alpha_1)|$ and $|\zeta(\h + it + i\alpha_2)|$ transition at $|\alpha_1 - \alpha_2| \approx \frac{1}{\log T}$. In particular these distribution appear independent when $|\alpha_1 - \alpha_2|$ is much larger than $\frac{1}{\log T}.$
\end{abstract}

\maketitle
\section{Introduction}
Finding moments of the Riemman zeta function $\zeta(s)$ is an important problem in analytic number theory, especially the moments on the critical line:
$$ M_k(T) := \int_0^T \big| \zeta(\h + it) \big|^{2k} \> dt. $$
Extensive work has been done to find an asymptotic formula for $M_k(T)$; however, the only unconditional results in this direction are proven for $k =1,$ due to Hardy and Littlewood, and k = 2, due to Ingham \cite{Ti}. Assuming the Riemann hypothesis (RH), good upper and lower bounds are available. Ramachandra \cite{Rama} proved that for any positive real integer $k$, $ M_k(T) \gg T\log^{k^2} T.$ Later in 2007, Soundararajan \cite{Moment} showed that for every positive real number $k$ and every $\epsilon > 0$ 
\begin{equation} \label{eqn:uppSound}
M_k(T) \ll_{k,\epsilon} T(\log T)^{k^2 + \epsilon}.
\end{equation} 
In 2000, Keating and Snaith \cite{KS} conjectured an asymptotic formula for $M_k(T)$, for every positive integer $k$, based on the random matrix model for the zeros of $\zeta(s).$ They suggested that the value distribution of $\zeta(1/2 + it)$ is related to that of the characteristic polynomials of random unitary matrices, $\Lambda(e^{i\theta}) := \prod_{n=1}^N (1 - e^{i(\theta_n - \theta)}).$ Therefore they computed the moments of the characteristic polynomials to arrive at a conjecture for $M_k(T)$ and showed that 
\begin{equation} \label{eqn:moment_rmt}
g_U(N,k) := \int_{U(N)} |\Lambda(e^{i\theta})|^{2k} \> dU_N \sim \frac{G^2(k+1)}{G(2k+1)} N^{k^2},
\end{equation}
where $G$ is the Barnes G-function.  
Using the scaling $N = \log \tfrac{T}{2\pi},$ this led them to conjecture that $M_k(T) \sim a(k)\frac{G^2(k+1)}{G(2k+1)}T\log^{k^2}T,$ where 
\begin{equation} \label{eqn:ak}
a(k) := \prod_p \big(\big(1-\tfrac{1}{p} \big)^{k^2} \sum_{m=0}^{\infty} \big(\tfrac{\Gamma(m+k)}{m!\Gamma(k)}\big)^2 p^{-m}\big).
\end{equation}
This conjecture agrees with the known results for $k=1, 2.$ 
\\
\\
A generalization of the moments of $\zeta(s)$ are the shifted moments, defined as
\begin{equation} \label{eqn:Mkt}
M_{\bf k}(T,{\overrightarrow{\alpha}}) = \int_0^T |\zeta(\h + it + i\alpha_1)|^{2k_1}|\zeta(\h + it + i\alpha_2)|^{2k_2} ... |\zeta(\h + it + i\alpha_m)|^{2k_m} \> dt,
\end{equation}
where ${\bf k} = (k_1, k_2,..., k_m)$ is a sequence of positive real numbers and ${\overrightarrow{\alpha}} = (\alpha_1,...\alpha_{m}),$ where $\alpha_i \neq \alpha_j$ when $i \neq j,$ $|\alpha_i - \alpha_j| = O(1), $ and $\alpha_i = O(\log T).$ Also $\alpha_i = \alpha_i(T)$ is a real valued function in terms of T such that $\lim_{T \rightarrow \infty} \alpha_i \log T$ and  $\lim_{T \rightarrow \infty} (\alpha_i - \alpha_j) \log T$ exists or equals $\pm \infty.$

Conrey, Farmer, Keating, Rubinstein and Snaith \cite{CFKRS} gave a general recipe from which an asymptotic formula for the shifted moments of the Riemann zeta function may be conjectured. However, it is not immediately clear from their recipe what the leading asymptotic term for these shifted moments should be, and this is elucidated by K\"{o}sters in \cite{Kshift}.  Specifically, based on the work in \cite{CFKRS}, K\"{o}sters conjectures that for any $T_0 > 1$ and $\mu_1,..., \mu_M \in {\mathbb R},$
\begin{eqnarray} \label{eqn:conjshift}
&& \lim_{T \rightarrow \infty} \frac{1}{T(\log T)^{M^2}} \int_{T_0}^T |\zeta(\h + it + i\tfrac{2\pi\mu_1}{\log T})|^{2} ... |\zeta(\h + it + i\tfrac{2\pi\mu_M}{\log T})|^{2} \> dt \\
&=& \frac{a(M)}{\Delta^2(2\pi\mu_1,...,2\pi\mu_M)} \det (b_{jk})_{j,k = 1,...,M}, \nonumber  
\end{eqnarray}       
where $\Delta(x_1,..,x_n) = \prod_{1 \leq j < k \leq n} (x_k - x_j),$ $a(M)$ is defined in (\ref{eqn:ak}),  and $ b_{jk} = \frac{\sin \pi (\mu_j - \mu_k)}{ \pi (\mu_j - \mu_k)}$ if $j \neq k,$ and 1 otherwise. Note that in the case where two or more of $\mu_i's$ are equal, the right hand side is defined as the continuous extension.

Inspired by the above work, we are interested in studying the shifted moments $M_{(k,k)}(T, (\alpha_1, \alpha_2)),$ where $k$ is a positive integer. This will help us to understand the correlation between the values of $\zeta(\h + it + i\alpha_1)$ and $\zeta(\h + it + i\alpha_2).$  As stated at the beginning, it is difficult to compute $M_{(k,k)}(T, (\alpha_1, \alpha_2)).$ Hence we will start by formulating a conjecture for its asymptotic formula based on Keating and Snaith's random matrix model. Specifically, the leading asymptotic term of $M_{(k,k)}(T, (\alpha_1, \alpha_2))$ is as follows
\begin{conj} \label{eqn:conjcor}
$$
M_{(k,k)}(T, (\alpha_1, \alpha_2)) \left\{ \begin{array}{lll}
                    \sim_{k} & T \log^{4k^2} T & {\rm if} \,\,\, \lim_{T \rightarrow \infty} |\alpha_1 - \alpha_2|\log T = 0, \\
\sim_{k, c} & T \log^{4k^2} T & {\rm if} \,\,\, \lim_{T \rightarrow \infty} |\alpha_1 - \alpha_2|\log T = c \neq 0. \\
                     \sim_{k} & \frac{1}{|\alpha_1 - \alpha_2|^{2k^2}}T \log^{2k^2}T & {\rm if} \,\,\, \lim_{T \rightarrow \infty} |\alpha_1 - \alpha_2|\log T = \infty. 
                 \end{array} \right.
$$
\end{conj}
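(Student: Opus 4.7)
The approach is the Keating--Snaith random matrix heuristic, adapted to two shifts. I would model the joint value distribution of $\zeta(\h + it + i\alpha_1)$ and $\zeta(\h + it + i\alpha_2)$ by that of two characteristic polynomials $\Lambda(e^{-i\alpha_1})$ and $\Lambda(e^{-i\alpha_2})$ evaluated on a common $U \in U(N)$, with the standard identification $N = \log \tfrac{T}{2\pi}$ matching the mean zero spacing of $\zeta$ to the mean eigenangle spacing. Writing the conjectural asymptotic as
\[
M_{(k,k)}(T,(\alpha_1,\alpha_2)) \;\sim\; T \cdot A_k(\alpha_1-\alpha_2)\cdot g_U\bigl(N,k,(\alpha_1,\alpha_2)\bigr),
\]
where $g_U\bigl(N,k,(\alpha_1,\alpha_2)\bigr) := \int_{U(N)} |\Lambda(e^{-i\alpha_1})|^{2k}|\Lambda(e^{-i\alpha_2})|^{2k}\,dU_N$ and $A_k$ is an Euler-product arithmetic factor of the type appearing in \cite{CFKRS, Moment}, the task reduces to evaluating $g_U$ asymptotically in $N$ and identifying the correct $A_k$ in each regime.

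For the random matrix integral the plan is to use Heine's identity (or, for integer $k$, Weyl integration together with the Bump--Gamburd formula for characteristic polynomials) to express $g_U$ as a Toeplitz determinant with a Fisher--Hartwig symbol having two singularities at $e^{i\alpha_1}$ and $e^{i\alpha_2}$, each of strength $2k$. The Fisher--Hartwig asymptotics split into three regimes matching the three cases of the conjecture. When the singularities lie at distance $o(1/N)$ they merge into a single singularity of strength $4k$, giving $g_U \sim \frac{G^2(2k+1)}{G(4k+1)}N^{4k^2}$, which via $N=\log\tfrac{T}{2\pi}$ becomes the first case. When the distance is $\asymp 1/N$, the merged contribution is modulated by a Selberg-type constant depending on $c=\lim(\alpha_1-\alpha_2)\log T$, still of order $N^{4k^2}$ but with a $c$-dependent factor analogous to the $\det(b_{jk})$ of \eqref{eqn:conjshift}. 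Finally, when the separation is much larger than $1/N$ the two singularities decouple: each contributes its own $\frac{G^2(k+1)}{G(2k+1)}N^{k^2}$, while the cross-interaction produces $|e^{i\alpha_1}-e^{i\alpha_2}|^{-2k^2}\sim |\alpha_1-\alpha_2|^{-2k^2}$, yielding the third case.

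Finally I would read off $A_k(\alpha_1-\alpha_2)$ from the CFKRS recipe as an Euler product whose local factors depend on $k$ and on $p^{i(\alpha_1-\alpha_2)}$: it specialises to $a(2k)$ when the shift tends to $0$, and to a product that reduces to two independent copies of $a(k)$ when the shift is bounded away from $0$, so that the implied constants in $\sim_k$ and $\sim_{k,c}$ are well-defined and depend only on the stated parameters. The hard part is the intermediate regime $|\alpha_1-\alpha_2|\log T\to c\neq 0$: one must carefully match the Fisher--Hartwig Selberg constant against the arithmetic Euler product evaluated at $s=i(\alpha_1-\alpha_2)$ so that every $\log T$ cancels and the limiting constant depends only on $k$ and $c$. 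By contrast, the merged ($c=0$) and fully decoupled regimes use only the endpoint asymptotics of Fisher--Hartwig and are comparatively routine once $A_k$ has been fixed.
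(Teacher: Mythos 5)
Your heuristic arrives at the same three regimes and the same leading orders, but by a genuinely different route. The paper computes $g_{(k,k)}(N,(0,\alpha)) = \int_{U(N)}|\Lambda(1)|^{2k}|\Lambda(e^{i\alpha})|^{2k}\,dU_N$ directly from the CFKRS contour-integral representation (equation (1.5.9) of \cite{CFKRS}) and then treats each regime by contour deformation: for $|\alpha|N\to 0$ it rescales $z_j = v_j/N$ and collapses the integral to $g_U(N,2k)\sim\tfrac{G^2(2k+1)}{G(4k+1)}N^{4k^2}$; for $|\alpha|N\to\infty$ it splits each contour into two small circles around $0$ and $i\alpha$, expands into $2^{4k}$ terms, and shows by an explicit minimisation over how the poles distribute that only the balanced configurations ($k$ contours at each pole in each block) dominate, giving $|1-e^{i\alpha}|^{-2k^2}\,g_U(N,k)^2$ (following Lemma 3 of \cite{KO}); for $|\alpha|N\to c$ it simply invokes K\"{o}sters's limit \cite{Kshift}. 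You instead go through Heine's identity and Fisher--Hartwig asymptotics for a Toeplitz symbol with two algebraic singularities, reading the decoupled regime off the separated FH formula (each singularity contributing $N^{k^2}$, plus the cross factor $|e^{i\alpha_1}-e^{i\alpha_2}|^{-2k^2}$) and the merged regime off the single-singularity formula with doubled exponent. This also produces the correct orders and is arguably cleaner at the two extremes, but note that uniform Fisher--Hartwig asymptotics across the merging scale $|\alpha_1-\alpha_2|\asymp 1/N$ involve Painlev\'e-type crossover functions, which is substantially more machinery than the paper deploys (the paper sidesteps this by citing K\"{o}sters's formula, which is already a limiting statement at the $1/N$ scale); you correctly identify this as the delicate case. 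One further difference is scope: you carry an explicit arithmetic factor $A_k(\alpha_1-\alpha_2)$ and discuss matching it across regimes, whereas Conjecture \ref{eqn:conjcor} as stated only specifies orders of magnitude in $T$ and $\log T$ and hides the constants inside $\sim_k$ and $\sim_{k,c}$, so pinning down $A_k$ is not required to justify the conjecture in the form given.
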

Note that our conjecture specializes to K\"{o}sters's conjecture when $ \alpha_1 - \alpha_2 = c/\log T,$ for some fixed constant $c \in {\mathbb R}.$ We will discuss conjecture \ref{eqn:conjcor} in more detail in \S 2. 

Even though we cannot prove the asymptotic formula for $M_{\bf k}(T, {\overrightarrow{\alpha}}),$ assuming RH, we are able to find similar upper bound to (\ref{eqn:uppSound}) as in the following theorem.
\begin{thm}\label{thm:main} 
Assume RH. Let ${\bf k} = (k_1,...,k_m)$ be a sequence of positive real numbers and ${\overrightarrow{\alpha}} = (\alpha_1,..., \alpha_m)$ be defined as in (\ref{eqn:Mkt}). Then for $T$ large, 
$$ M_{\bf k}(T,{\overrightarrow{\alpha}}) \ll_{{\bf k},\epsilon} T(\log T)^{k_1^2 + k_2^2 +...+ k_m^2 + \epsilon}\prod_{i < j}\big({\rm min}\{\frac{1}{|\alpha_i - \alpha_j|}, \log T\}\big)^{2k_ik_j}. $$
\end{thm}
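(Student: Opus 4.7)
The plan is to adapt Soundararajan's upper-bound method from \cite{Moment} to the shifted setting. His main inequality, valid under RH for $t \in [T,2T]$, bounds $\log|\zeta(\tfrac12 + it)|$ from above by a short Dirichlet polynomial over primes up to $x$ (weighted by $\log(x/p)/\log x$ with a further parameter $\lambda$), modulo an error of size $O(1/\lambda)$ and a negligible prime-square contribution. I would apply this inequality with shift $\alpha_j$ in place of $0$, multiply by $k_j$, and sum over $j$, obtaining for $t \in [T,2T]$
\begin{align*}
V(t) := \sum_{j=1}^m k_j \log|\zeta(\tfrac12 + it + i\alpha_j)| \;\leq\; \mathrm{Re}\sum_{p \leq x} \frac{c_p(\vec\alpha)\log(x/p)}{p^{1/2 + \lambda/\log x + it}\,\log x} + O\!\Big(\tfrac{1}{\lambda}\Big) + E(t),
\end{align*}
where $c_p(\vec\alpha) := \sum_{j} k_j p^{-i\alpha_j}$ packages the shift dependence into a single Dirichlet polynomial.

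The second step is Soundararajan's measure-theoretic framework. Writing $S(V)$ for the measure of $\{t \in [T,2T] : V(t) > V\}$, one has $M_{\bf k}(T,\vec\alpha) = 2\int_{\R} e^{2V}S(V)\,dV$. Computing high even moments of the Dirichlet polynomial above, which evaluate by near-orthogonality of primes in the spirit of Selberg, and applying Chebyshev's inequality yields a Gaussian tail bound $S(V) \ll T\exp(-V^2/(2\sigma_x^2))$ throughout the bulk, with variance proxy
\begin{align*}
\sigma_x^2 \;\approx\; \tfrac12 \sum_{p \leq x} \frac{|c_p(\vec\alpha)|^2}{p}.
\end{align*}
As in \cite{Moment}, to control the full range of $V$ one splits the integral into several ranges and chooses $x$ (and $\lambda$) adaptively, with $\log x \asymp \log T$ in the dominant range.

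The new analytic ingredient is the explicit evaluation of $\sigma_x^2$ in terms of the shifts. Expanding
\begin{align*}
|c_p(\vec\alpha)|^2 = \sum_{j} k_j^2 + 2\sum_{i<j} k_ik_j \cos\bigl((\alpha_i-\alpha_j)\log p\bigr),
\end{align*}
the diagonal contributes $\sum_j k_j^2 \log\log x + O(1)$ via Mertens, while the off-diagonal reduces by partial summation from $\log\zeta(1+i\beta)$ to the standard estimate
\begin{align*}
\sum_{p \leq x}\frac{\cos(\beta\log p)}{p} = \log \min\Big\{\log x, \tfrac{1}{|\beta|}\Big\} + O(1),
\end{align*}
uniform for $\beta$ in the relevant range. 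Substituting back, $2\sigma_x^2$ equals $\sum_j k_j^2\log\log x + 2\sum_{i<j}k_ik_j\log\min\{\log x, 1/|\alpha_i-\alpha_j|\} + O(1)$. A Laplace-type evaluation of $\int e^{2V}S(V)\,dV$, with maximum attained at $V \sim 2\sigma_x^2$, then yields $T e^{2\sigma_x^2}$ up to polynomial factors; on substituting $\log x \asymp \log T$ this is precisely the right-hand side of the theorem, the $(\log T)^\epsilon$ absorbing polynomial losses and the contribution of the $V$-ranges beyond the Gaussian regime.

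The main obstacle will be the adaptive splitting of $V$ and the associated choices of $x$ and $\lambda$, uniformly in the shifts. Soundararajan's original argument uses three ranges carefully matched to the moments he can compute; here the shift dependence of $\sigma_x^2$ through the cosine sum — in particular the transition at $|\alpha_i-\alpha_j| \asymp 1/\log x$ where the $\min$ switches branches — requires these choices to be made uniformly for $|\alpha_i-\alpha_j|$ across the entire range, from $o(1/\log T)$ (where the pair effectively merges into a single zeta to the power $2(k_i+k_j)$) up to $O(1)$ (where it contributes negligibly). Verifying that the $\min$ structure survives this uniform choice, and that the moment bound for the Dirichlet polynomial is sharp enough to recover the full exponent $\sum k_j^2 + \epsilon$ rather than a weaker one, is the technical heart of the proof.
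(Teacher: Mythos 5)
Your proposal is essentially the same approach as the paper's. You correctly identify the measure-theoretic reduction to bounding $\textup{meas}(S(T,V))$, the use of Soundararajan's conditional inequality applied at each shift and combined into a single Dirichlet polynomial with coefficients $\sum_j k_j p^{-i\alpha_j}$, the high even moments plus Chebyshev step, the adaptive splitting of the $V$-range with parameter choices tied to a quantity ($W$ in the paper, $2\sigma_x^2$ in yours) built from the prime sum, and — crucially — the new ingredient $\sum_{p \le z}\cos(a\log p)/p \le \log\min\{1/|a|,\log z\} + O(1)$ (the paper's Lemma \ref{uppCo}), which is exactly what converts the variance proxy into the claimed exponent; the only cosmetic difference is that you state this cosine-sum estimate as an equality whereas the paper proves and uses only the upper bound, which is all that is needed.
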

To obtain the upper bound above, we follow Soundararajan's techniques for finding upper bounds for the moments of Riemann zeta function \cite{Moment}.  Soundararajan's work is built on Selberg's work on the distribution of $\log |\zeta(\h + it)|$ \cite{Selberg}. He started from estimating an upper bound for ${\rm meas} (A(T,V))$, where $V \geq 3,$ and $A(T,V) = \{ t \in [T, 2T]: \log |\zeta(\h + it)| \geq V \}.$ Then he observed that 
$$\int_T^{2T} |\zeta(1/2 + it)|^2k \> dt = - \int_{-\infty}^{\infty} e^{2kV} \>d\,\, \textup{meas}(A(T,V)) = 2k \int_{-\infty}^{\infty} e^{2kV} \,\, \textup{meas}(A(T,V)) \> dV.$$
Hence an upper bound for the moment of the Riemann zeta function in (\ref{eqn:uppSound}) is deduced from the upper bound of $\textup{meas}(A(T,V))$.
For the shifted moments, we will instead estimate
\begin{equation} \label{eqn:stv}
 S(T,V) = \{t \in [T,2T]: \log|\zeta(\h + it + i\alpha_1)|^{k_1} +...+ \log|\zeta(\h + it + i\alpha_m)|^{k_m} \geq V \}.
\end{equation}  
The rest of our proof of Theorem \ref{thm:main} is then analogous to Soundararajan's proof of the Theorem in \cite{Moment}, except that we are required to use Lemma \ref{uppCo}. The detail of the proof will be discussed in \S 3. Recently, Soundararajan and Young \cite{SY} have used a similar version of Lemma \ref{uppCo} and similar extension of Soundararajan's work to obtain the second moment of quadratic twists of modular $L$-functions. 
\\
\\
We also establish a lower bound for $M_{\bf k}(T,{\overrightarrow{\alpha}})$ unconditionally in Theorem \ref{lowerbound} below. 
\begin{thm} \label{lowerbound}
Unconditionally, for large $T$, ${\bf k} = (k_1,...,k_m)$ a sequence of positive integers, $|\alpha_i - \alpha_j| = O(1)$ for any $i,j = 1,..,m,$ and $|\alpha_i| = O(\log \log T),$
$$ M_{{\bf k}}(T, \alpha) \gg_{{\bf k}, \beta} T (\log T)^{k_1^2 + ... + k_m^2}\prod_{i < j}\big({\rm min}\{\frac{1}{|\alpha_i - \alpha_j|}, \log T\}\big)^{2k_ik_j}, $$
where 
\begin{equation} \label{eqn:beta}
\beta := \max_{\{(i,j) | |\alpha_i - \alpha_j| = O(1/\log T)\}} \{ \lim_{T \rightarrow \infty} |\alpha_i - \alpha_j| \log T \}. 
\end{equation}
\end{thm}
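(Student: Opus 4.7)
The plan is to apply a Cauchy--Schwarz (mollifier) argument in the spirit of Ramachandra, exploiting the fact that the $k_j$ are positive integers so that $\prod_j \zeta(s+i\alpha_j)^{k_j}$ has a genuine Dirichlet series. Set
\[
F(t) := \prod_{j=1}^m \zeta(\tfrac12 + it + i\alpha_j)^{k_j},
\]
so $|F(t)|^2$ is precisely the integrand of $M_{\mathbf{k}}(T,\overrightarrow{\alpha})$. Define coefficients by $\prod_j \zeta(s+i\alpha_j)^{k_j} = \sum_n d_{\mathbf{k},\overrightarrow{\alpha}}(n)\,n^{-s}$, so that $d_{\mathbf{k},\overrightarrow{\alpha}}(n) = \sum_{n_1\cdots n_m=n}\prod_j d_{k_j}(n_j)\,n_j^{-i\alpha_j}$. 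Fix a small $\theta>0$, set $X=T^{\theta}$, and define
\[
A(t) := \sum_{n\leq X}\frac{d_{\mathbf{k},\overrightarrow{\alpha}}(n)}{n^{1/2+it}}.
\]
Cauchy--Schwarz then gives
\[
M_{\mathbf{k}}(T,\overrightarrow{\alpha}) \;\geq\; \frac{\bigl|\int_T^{2T} F(t)\overline{A(t)}\,dt\bigr|^2}{\int_T^{2T} |A(t)|^2\,dt},
\]
so it suffices to evaluate numerator and denominator.

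The denominator is handled by the Montgomery--Vaughan mean value theorem for Dirichlet polynomials, which yields $\int_T^{2T}|A(t)|^2\,dt = (T+O(X))\sum_{n\leq X}|d_{\mathbf{k},\overrightarrow{\alpha}}(n)|^2/n$. For the numerator, I would use a smooth approximate functional equation (or a short contour shift into $\mathrm{Re}(s)=1/\log T$) to write $F(t)$, up to negligible error, as a Dirichlet polynomial of length $\approx T^{m/2}$ with coefficients $d_{\mathbf{k},\overrightarrow{\alpha}}(n)$. Since $X$ is much shorter than this length, the cross term $\int_T^{2T} F(t)\overline{A(t)}\,dt$ unfolds to a single diagonal of size $T\sum_{n\leq X}|d_{\mathbf{k},\overrightarrow{\alpha}}(n)|^2/n$, plus off-diagonal pieces $\sum_{n \neq m}$ carrying oscillatory phases $(m/n)^{it}$ that are small by standard mean-value / Weyl-sum bounds under the hypotheses $|\alpha_j|=O(\log\log T)$ and $|\alpha_i-\alpha_j|=O(1)$. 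Inserting these evaluations into Cauchy--Schwarz yields
\[
M_{\mathbf{k}}(T,\overrightarrow{\alpha}) \;\gg\; T\sum_{n\leq X}\frac{|d_{\mathbf{k},\overrightarrow{\alpha}}(n)|^2}{n}.
\]

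The remaining step is purely arithmetic: to show
\[
\sum_{n\leq X}\frac{|d_{\mathbf{k},\overrightarrow{\alpha}}(n)|^2}{n} \;\asymp_{\mathbf{k},\beta}\; (\log X)^{k_1^2+\cdots+k_m^2}\prod_{i<j}\min\!\left(\tfrac{1}{|\alpha_i-\alpha_j|},\log X\right)^{2k_ik_j}.
\]
For this I would analyze the Euler product of $\sum_n |d_{\mathbf{k},\overrightarrow{\alpha}}(n)|^2/n^s$ at $s=1$. The $p$-th local factor at leading order is $1 + |d_{\mathbf{k},\overrightarrow{\alpha}}(p)|^2/p + O(p^{-2})$, where $|d_{\mathbf{k},\overrightarrow{\alpha}}(p)|^2 = \sum_j k_j^2 + 2\sum_{i<j}k_ik_j\cos((\alpha_i-\alpha_j)\log p)$. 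Summing the logarithm of local factors and applying the uniform Mertens-type bound $\sum_{p\leq X}\cos(\gamma\log p)/p = \log\min(\log X,1/|\gamma|) + O(1)$ for bounded real $\gamma$, then passing from the Euler product to the truncated sum via a Selberg--Delange / Perron-style extraction, produces the desired size. The main obstacle is proving this arithmetic estimate uniformly across the regimes $|\alpha_i-\alpha_j|\ll 1/\log T$, $|\alpha_i-\alpha_j|\asymp 1/\log T$, and $|\alpha_i-\alpha_j|\gg 1/\log T$: the transition region is precisely where the two arguments of the $\min$ are comparable, and the ratio $|\alpha_i-\alpha_j|\log T$ in this boundary case determines the implied constant through the parameter $\beta$. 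A secondary difficulty is ensuring that the oscillatory off-diagonal remainder in $\int F\overline{A}\,dt$, which carries bounded phase factors of the form $T^{-i(\alpha_i-\alpha_j)}$, does not conspire to cancel the positive main term; this is precisely where the hypothesis $|\alpha_j|=O(\log\log T)$ is used to keep the relevant error terms subdominant.
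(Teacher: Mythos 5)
Your proposal follows the same overall route as the paper: mollify by a short Dirichlet polynomial $A(t)$ with coefficients $d_{\mathbf k,\overrightarrow{\alpha}}(n)$, apply Cauchy--Schwarz to reduce $M_{\mathbf k}$ to the ratio $|\int F\overline A|^2 / \int|A|^2$, evaluate both sides diagonally, and then show the arithmetic quantity $\sum_{n\le X}|d_{\mathbf k,\overrightarrow{\alpha}}(n)|^2/n$ has the claimed size. This is exactly the Rudnick--Soundararajan scheme the paper uses (Lemmas~\ref{lem:at}, \ref{lem:approxL0}, \ref{lem:S1}).

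There is, however, a real gap in the step where you unfold $\int_T^{2T}F(t)\overline{A(t)}\,dt$. After expanding $F$ as a Dirichlet polynomial of length roughly $T^{O(\sum k_j)}$, the off-diagonal integral with a sharp cutoff is only $\int_T^{2T}(n/m)^{it}dt \ll \min(T,1/|\log(m/n)|)$, which is $O(1)$ for the overwhelming majority of pairs $(m,n)$ with $m$ far from $n$. Summing $|a_m a_n|/(mn)^{1/2}$ over $m\le T^{O(1)}$, $n\le X$ then produces a contribution of size $T^{O(1)}$, vastly exceeding the main term $T\sum_{n\le X}|a_n|^2/n$. ``Standard mean-value / Weyl-sum bounds'' do not rescue this: the Montgomery--Vaughan error $O(\sum_n n|a_n|^2)$ is dominant precisely because the Dirichlet polynomial for $F$ is much longer than $T$. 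The paper avoids this by inserting a smooth compactly supported weight $K(t/T)$ into both Cauchy--Schwarz factors from the outset; the resulting phase integral is $T\hat K(T\log(m/n))$, and since the mollifier is truncated at $x=\sqrt T$ one has $T|\log(m/n)|\gg T^{1/2}$ for every $m\ne n$, so the rapid decay $\hat K(\xi)\ll_r\xi^{-r}$ kills all off-diagonals. Your remark about ``bounded phase factors $T^{-i(\alpha_i-\alpha_j)}$'' controlling the off-diagonal is not the relevant mechanism. A secondary, smaller issue: the dependence on $\beta$ in the implied constant enters exactly through the asymptotic for $\sum_{n\le X}|d_{\mathbf k,\overrightarrow{\alpha}}(n)|^2/n$ when several $\alpha_i-\alpha_j$ are of size $\asymp 1/\log T$; you correctly flag this as the main arithmetic difficulty, but in the paper it occupies all of Lemma~\ref{lem:at} plus Appendix~5.2 (separating the residues near $s=0$ from those at $s=i(\alpha_i-\alpha_j)$ with $(i,j)\in\widetilde W$, and a nontrivial combinatorial argument to show the latter are negligible), so a one-line ``Perron extraction'' underestimates what is needed.
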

The proof of Theorem \ref{lowerbound} uses similar techniques to Rudnick's and Soundararajan's work on finding lower bounds for the moments of a family of Dirichlet $L$-functions \cite{Lower}. Let
\begin{equation} \label{eqn:atintro}
A(t) = \sum_{j \leq x} \frac{\sum_{n_1n_2...n_m = j}d_{k_1}(n_1)d_{k_2}(n_2)...d_{k_m}(n_m)n_1^{-i\alpha_1}n_2^{-i\alpha_2}...n_m^{-i\alpha_m}}{j^{\frac{1}{2}+it}},
\end{equation}
and $d_k(n) = \sum_{a_1a_2...a_k = n} 1$ and $x = T^{1/2}.$ Note that $A(t)$ is a short truncation of  
$$\zeta(\h + it + i\alpha_1)^{k_1}\zeta(\h + it + i\alpha_2)^{k_2} ... \zeta(\h + it + i\alpha_m)^{k_m}.$$
We will compute a lower bound for 
\begin{equation} \label{eqn:S1}
 S_1 = \left| \int_{-\infty}^{\infty} \zeta(\h + it + i\alpha_1)^{k_1}\zeta(\h + it + i\alpha_2)^{k_2} ... \zeta(\h + it + i\alpha_m)^{k_m} \overline{A(t)} K\left(\frac{t}{T} \right) \>dt \right|  , 
\end{equation}
where $K(x)$ is a nonnegative bounded function in ${\bf C}^{\infty}(\R)$ and compactly support in [1,2].  Also we will find an upper bound for    
$$ S_2 = \int_T^{2T} |A(t)|^{2} \>dt. $$
By Cauchy-Schwarz's inequality, we obtain
\begin{eqnarray*}
&& \int_T^{2T} \left| \zeta(\h + it + i\alpha_1)^{2k_1}\zeta(\h + it + i\alpha_2)^{2k_2} ... \zeta(\h + it + i\alpha_m)^{2k_m} \right| \> dt  \\
&\gg&  \int_{-\infty}^{\infty} \left| \zeta(\h + it + i\alpha_1)^{2k_1}\zeta(\h + it + i\alpha_2)^{2k_2} ... \zeta(\h + it + i\alpha_m)^{2k_m} \right| K\left(\frac{t}{T} \right) \> dt \\
&\geq&  S_1^2/ \left( \int_{-\infty}^{\infty} |A(t)|^{2} K\left(\frac{t}{T} \right) \>dt \right) \\
&\gg& \frac{S_1^2}{S_2}.
\end{eqnarray*}
Theorem \ref{lowerbound} will follow from showing that 
$$S_1 \gg_{{\bf k}, \beta} T (\log T)^{k_1^2 + ... + k_m^2}\prod_{i < j}\big({\rm min}\{\frac{1}{|\alpha_i - \alpha_j|}, \log T\}\big)^{2k_ik_j} \,\,\,\,\,({\rm Lemma \,\, \ref{lem:S1}}), $$
and 
$$ S_2 \ll_{{\bf k}, \beta} T (\log T)^{k_1^2 + ... + k_m^2}\prod_{i < j}\big({\rm min}\{\frac{1}{|\alpha_i - \alpha_j|}, \log T\}\big)^{2k_ik_j} \,\,\,\,\, ({\rm Lemma \,\, \ref{lem:at}}). $$
\\
Again in the case where $m = 2$, we obtain from Theorem \ref{thm:main} and \ref{lowerbound} that 
$$ T(\log T)^{2k^2} \ll M_{k,k}(\alpha_1, \alpha_2) \ll  T(\log T)^{2k^2 + \epsilon} $$
when $\lim_{T \rightarrow \infty} |\alpha_1 - \alpha_2|\log T = \infty$. Furthermore
$$ T(\log T)^{4k^2} \ll M_{k,k}(\alpha_1, \alpha_2) \ll  T(\log T)^{4k^2 + \epsilon}$$
when $\lim_{T \rightarrow \infty} |\alpha_1 - \alpha_2|\log T < \infty.$ the order of the leading asymptotic term of the upper and lower bounds for $M_{k,k}(\alpha_1, \alpha_2)$ correspond to the one in our conjecture \ref{eqn:conjcor}. The result suggests that the correlation of $|\zeta(\h + it + i\alpha_1)|$ and $|\zeta(\h + it + i\alpha_2)|$ transition at $|\alpha_1 - \alpha_2| \approx \frac{1}{\log T}$. In particular these distribution appear independent when $|\alpha_1 - \alpha_2|$ is much larger than $\frac{1}{\log T}.$
\\
\\
Acknowledgements: I am very grateful to Professor Soundararajan for his guidance throughout the making of this paper. I would like to thank Xiannan Li for helpful editorial comments. I would like to thank Professor Nina Snaith for suggesting a useful reference.

\section{Conjecture for the Shifted Moments}
Based on Keating and Snaith's random matrix model, to conjecture the shifted moments $M_{(k,k)}(T, (\alpha_1, \alpha_2)),$ we need to compute asymptotic formula for 
$$ g_{(k,k)}(N, (\alpha_1, \alpha_2)) :=  \int_{U(N)} |\Lambda(e^{i\alpha_1})|^{2k}|\Lambda(e^{i\alpha_2})|^{2k} \> dU_N.$$
Clearly, $g_{(k,k)}(N, (\alpha_1, \alpha_2)) = g_{(k,k)}(N, (0,\alpha_1 - \alpha_2)).$ Using the scaling $N = \log (T/2\pi),$ we can derive conjecture \ref{eqn:conjcor} from the following proposition. 
\begin{prop} \label{prop:rmt} Let $\alpha$ be fixed functions in term of $T$ such that $\lim_{N \rightarrow \infty} \alpha N$ exists or equals $\pm \infty$ and $\alpha_1 \neq n\pi,$ where $n is integer.$ As $N \rightarrow \infty$ we obtain
$$ g_{(k,k)}(N, (0, \alpha)) \sim \left\{ \begin{array}{ll}
                     \frac{G^2(2k+1)}{G(4k+1)} N^{4k^2} & {\rm if} \lim_{N \rightarrow \infty} |\alpha|N = 0, \\
                    C_k N^{4k^2} & {\rm if} \lim_{N \rightarrow \infty} |\alpha|N = c \neq 0, \\
                    |1 - e^{i\alpha}|^{-2k^2} \frac{G^4(k+1)}{G^2(2k+1)} N^{2k^2} & {\rm if} \lim_{N \rightarrow \infty} |\alpha|N = \infty,
                 \end{array} \right.
$$
where $C_k = \lim_{\substack{\mu_1,...\mu_k \rightarrow  0 \\ \mu_{k+1},..., \mu_{2k} \rightarrow c/2\pi }}  \frac{\det (b_{jl})_{j,l = 1,...,2k}}{\Delta^2(2\pi\mu_1,...,2\pi\mu_{2k})}$, and $b_{jl}$ is defined as in (\ref{eqn:conjshift}). 
\end{prop}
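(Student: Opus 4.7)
The plan is to derive each of the three cases from the exact joint-moment formula for the characteristic polynomials of $U(N)$. By the Weyl integration formula together with Heine's identity,
$$g_{(k,k)}(N,(0,\alpha)) \;=\; D_N[f_\alpha],$$
the $N\times N$ Toeplitz determinant whose symbol
$f_\alpha(\theta) = |1 - e^{i\theta}|^{2k}\,|1 - e^{i(\theta - \alpha)}|^{2k}$
carries two pure amplitude (Fisher--Hartwig) singularities of strength $k$ at $\theta = 0$ and $\theta = \alpha$. The three regimes of the proposition correspond to how these singularities are resolved against the natural eigenvalue spacing $1/N$.

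When $|\alpha| N \to 0$ the two singularities coalesce into a single singularity of strength $2k$ and the symbol tends to $|1-e^{i\theta}|^{4k}$, so the Keating--Snaith formula (\ref{eqn:moment_rmt}) applied at parameter $2k$ yields $\frac{G^2(2k+1)}{G(4k+1)}\,N^{4k^2}$. When $|\alpha| N \to \infty$ the two singularities are well separated on scale $1/N$, so the Fisher--Hartwig asymptotic for Toeplitz determinants with separated pure amplitude singularities applies: each singularity contributes its own Keating--Snaith factor $\tfrac{G^2(k+1)}{G(2k+1)}\,N^{k^2}$, and the interaction between them produces a cross factor $|1 - e^{i\alpha}|^{-2k^2}$, giving the third case.

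The transition regime $|\alpha| N \to c \neq 0$ is handled by starting from the Conrey--Farmer--Keating--Rubinstein--Snaith exact residue formula for $\int_{U(N)} \prod_{j=1}^{2k} |\Lambda(e^{2\pi i \mu_j / N})|^2\,dU_N$ with $2k$ distinct scaled shifts, whose leading asymptotic is $N^{4k^2}\,\det(b_{jl})/\Delta^2(2\pi\mu_1,\ldots,2\pi\mu_{2k})$ (the K\"osters predictor in (\ref{eqn:conjshift}) with no arithmetic factor, since we are on the random matrix side). Letting $\mu_1,\ldots,\mu_k \to 0$ and $\mu_{k+1},\ldots,\mu_{2k} \to c/(2\pi)$ gives the constant $C_k$; numerator and denominator both vanish to matching order at each coalescence, so the ratio has a finite continuous extension.

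The main obstacle will be the second case above, where one must extract the Fisher--Hartwig coefficient quantitatively and pin down the precise $G$-function product together with the cross factor $|1-e^{i\alpha}|^{-2k^2}$. A useful consistency check (and an alternative route that stays entirely within the CFKRS formula) is that the $c\to 0$ and $c\to\infty$ limits of $C_k\,N^{4k^2}$ must reproduce the first and third cases respectively: this forces $C_k \to G^2(2k+1)/G(4k+1)$ as $c\to 0$ and $C_k \sim c^{-2k^2}\,G^4(k+1)/G^2(2k+1)$ as $c\to\infty$, which pins down all the constants.
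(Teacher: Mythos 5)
Your route is genuinely different from the paper's. The paper never touches Toeplitz determinants or Fisher--Hartwig theory: it starts from the CFKRS exact contour-integral formula (their equation (1.5.9)) for $g_{(k,k)}(N,(0,\alpha))$, and in each of the three regimes it rescales the contour variables ($z_j = v_j/N$, or $z_j = v_j/N + i\epsilon_j\alpha$ with $\epsilon_j\in\{0,1\}$), expands, and does explicit power counting in $N$ and $|\alpha|$ to isolate the dominant residue configurations. In the separated regime $|\alpha|N\to\infty$ the contour splits into $2^{4k}$ pieces and a short optimization shows the ${2k\choose k}^2$ balanced configurations dominate, each contributing a factor of $g_U(N,k)$ and the cross factor $|1-e^{i\alpha}|^{-2k^2}$. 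Your Fisher--Hartwig reading of the same object is the natural alternative language, and your Case 3 (CFKRS/K\"osters plus the continuous extension of $C_k$) is essentially identical to the paper's.

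The genuine gap is in Case 2, and you flag it yourself as ``the main obstacle'' without resolving it. The proposition allows $\alpha = \alpha(N)\to 0$ with $|\alpha|N\to\infty$; then the two Fisher--Hartwig singularities at $0$ and $\alpha$ are \emph{merging} on the macroscopic scale even while separating on the local $1/N$ scale. The classical Fisher--Hartwig theorem (fixed, distinct singularities) does not apply here, and one would need the ``merging Fisher--Hartwig singularities'' theory of Claeys--Its--Krasovsky type, stated uniformly in the separation, to extract the leading constant and the cross factor $|1-e^{i\alpha}|^{-2k^2}$ in this regime. Your consistency check ($c\to 0$ and $c\to\infty$ limits of $C_k N^{4k^2}$ matching Cases 1 and 3) pins down constants only once the $N$-power and the structure of the asymptotic are already known; it is not a proof of Case 2 and in particular does not produce the required uniformity. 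The paper avoids the whole issue because after rescaling around each pole the merging is recorded transparently in the integrand: $(1-e^{i(\epsilon_{j+2k}-\epsilon_i)\alpha})^{-1}$ and $(v_j/N + i\epsilon_j\alpha)^{\pm 2k}$ are handled directly by the hypothesis $|\alpha|N\to\infty$, with no separate uniformity input. Your Case 1 has the same (milder) issue: ``the symbol tends to $|1-e^{i\theta}|^{4k}$'' is a pointwise statement, and you need to justify that the Toeplitz determinant converges along with the symbol, which again is a uniform-in-$\alpha$ question that the paper's rescaling $z_j = v_j/N$ dispatches automatically.
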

The idea of the proof when $\lim_{\rightarrow} |\alpha|N = \infty$ is similar to the proof of Lemma 3 in \cite{KO}. The result when $\lim_{\rightarrow} |\alpha|N = c \neq 0$ is due to K\"{o}sters in \cite{Kshift}.
\begin{proof} First we begin the proof by the following identity, which will be useful later. 
\begin{equation} \label{eqn:lim_gU}
\lim_{N \rightarrow \infty} \frac{g_U(N,k)}{N^{k^2}} = \frac{1}{(2\pi i)^{2k}}\frac{1}{(k)!^2}\oint... \oint \frac{\prod_{i, j = 1}^{k} \frac{1}{v_{j+k} - v_i}\Delta^2(v_1, ...., v_{2k}) \exp(\tfrac{1}{2} \sum_{j=1}^k (v_j - v_{k+j}))}{\prod_{j=1}^{2k} v_j^{2k}} \>dv_1 dv_2...dv_{2k},
\end{equation}
where we integrate over small circles around $v_i = 0$. This equation is proved in Lemma 5 of \cite{KO}. Next from equation (1.5.9) of \cite{CFKRS}, we have
\begin{eqnarray*}
 && g_{(k,k)}(N, (0, \alpha)) = \int_{U(N)} |\Lambda(1)|^{2k}|\Lambda(e^{i\alpha})|^{2k} \> d U_N \\
&=& \frac{1}{(2\pi i)^{4k}} \frac{1}{(2k)!^2} \oint...\oint \frac{\prod_{i, j = 1}^{2k} (1 - e^{-z_i + z_{j + 2k}})^{-1}\Delta^2(z_1, ...., z_{4k})e^{\tfrac{N}{2}\sum_{j=1 }^{2k}(z_j - z_{j+2k})}}{\prod_{j = 1}^{4k}z_j^{2k} (z_j - i\alpha)^{2k}} \> dz_1 ...dz_{4k},  
\end{eqnarray*}
where the path of integration encloses $i\alpha,$ and 0. 
\\
\\
{\bf Case 1:} $\lim_{N \rightarrow \infty} |\alpha|N = 0.$ 

Let $z_j = v_j/N.$ The integral above becomes
$$\frac{1}{(2\pi i)^{4k}} \frac{1}{(2k)!^2} \oint...\oint \frac{\prod_{i, j = 1}^{2k} (1 - e^{\tfrac{-v_i + v_{j + 2k}}{N}})^{-1}\Delta^2(v_1, ...., v_{4k})e^{\tfrac{1}{2}\sum_{j=1 }^{2k}(v_j - v_{j+2k})}}{\prod_{j = 1}^{4k}v_j^{2k} (v_j - iN\alpha)^{2k}} \> dv_1 ...dv_{4k}. $$
As $N \rightarrow \infty,$ $(1 - e^{\tfrac{-v_i + v_{j + 2k}}{N}})^{-1} \sim \frac{N}{v_i - v_{j + 2k}},$ and $N\alpha \sim 0.$ Therefore 
\begin{eqnarray*}
&& \lim_{N \rightarrow \infty} \frac{g_{(k,k)}(N, (0, \alpha))}{N^{4k^2}} \\
&=& \frac{1}{(2\pi i)^{4k}} \frac{1}{(2k)!^2} \oint...\oint \frac{\prod_{i, j = 1}^{2k} \frac{1}{v_i - v_{j + 2k}} \Delta^2(v_1, ...., v_{4k})e^{\tfrac{1}{2}\sum_{j=1 }^{2k}(v_j - v_{j+2k})}}{\prod_{j = 1}^{4k}v_j^{4k}} \> dv_1 ...dv_{4k} \\
&=& \lim_{N \rightarrow \infty} \frac{g_U(N,2k)}{N^{4k^2}} = \frac{G^2(2k+1)}{G(4k+1)} , 
\end{eqnarray*}
where the last line follows from (\ref{eqn:moment_rmt}) and (\ref{eqn:lim_gU}).
\\
\\
{\bf Case 2:} $\lim_{N \rightarrow \infty} |\alpha|N = \infty.$

For this case, each contour can be deformed to two small circular contours centered respectively at the poles 0, $i\alpha.$ They are connected by two straight line paths which cancel each other. Therefore we can consider the contour integral above as a sum of $2^{4k}$ integrals in which each $z_j$ runs over one of the smaller circular paths. Let $\epsilon \in \{0,1\},$ and $\gamma_{\epsilon_j}$ be a circle with center $\epsilon_j i \alpha$ and small radius (less than $|\alpha|/2N$). Let 
\begin{eqnarray*}
&& I(N, k, \epsilon_1,..., \epsilon_{4k}) \\
&=& \frac{1}{(2\pi i)^{4k}} \frac{1}{(2k)!^2} \oint_{\gamma_{\epsilon_1}}...\oint_{\gamma_{\epsilon_{4k}}} \frac{\prod_{i, j = 1}^{2k} (1 - e^{-z_i + z_{j + 2k}})^{-1}\Delta^2(z_1, ...., z_{4k})e^{\tfrac{N}{2}\sum_{j=1 }^{2k}(z_j - z_{j+2k})}}{\prod_{j = 1}^{4k}z_j^{2k} (z_j - i\alpha)^{2k}} \> dz_1 ...dz_{4k}.
\end{eqnarray*} 
Hence
$$ \int_{U(N)} |\Lambda(1)|^{2k}|\Lambda(e^{i\alpha})|^{2k} \> d U_N = \sum_{\epsilon_j \in \{0, 1\}} I(N, k, \epsilon_1,..., \epsilon_{4k}). $$
Now we consider $I(N, k, \epsilon_1,..., \epsilon_{4k}).$ We change varibles $z_j = v_j/N + i\epsilon_j \alpha$ and obtain
\begin{eqnarray*}
&& I(N, k, \epsilon_1,..., \epsilon_{4k}) \\
&=& \frac{1}{(2\pi i)^{4k}} \frac{1}{(2k)!^2} \oint_{\gamma_0}...\oint_{\gamma_0} 
\prod_{i, j = 1}^{2k} (1 - e^{\tfrac{-v_i + v_{j + 2k}}{N} + i(\epsilon_{j+2k} - \epsilon_i)\alpha})^{-1} \Delta^2(\frac{v_1}{N} + i\epsilon_1\alpha, ...., \frac{v_{4k}}{N} + i\epsilon_{4k}\alpha) \\
&& \frac{\exp(\tfrac{1}{2}\sum_{j=1}^{2k}(v_j - v_{j+2k}) + iN(\epsilon_j - \epsilon_{j+2k})\alpha)}{N^{4k} \prod_{j = 1}^{4k}(\tfrac{v_j}{N} + i\epsilon_j \alpha)^{2k} (\tfrac{v_j}{N} + i\epsilon_j \alpha - i\alpha)^{2k}} \> dv_1 ...dv_{4k}. 
\end{eqnarray*}
For large $N$, $(1 - e^{\tfrac{-v_i + v_{j + 2k}}{N}})^{-1} \sim \tfrac{N}{v_i - v_{j+ 2k}}.$ Since $\lim_{N \rightarrow \infty} |\alpha| N = \infty,$ as $N \rightarrow \infty,$  we have
\begin{eqnarray} \label{eqn:INkepsilon}
&& I(N, k, \epsilon_1,..., \epsilon_{4k}) \nonumber \\
&\sim& \frac{1}{(2\pi i)^{4k}} \frac{N^{8k^2 - 4k}}{(2k)!^2} \oint_{\gamma_0}...\oint_{\gamma_0} \prod_{\substack{i,j = 1 \\ \epsilon_{j+2k} = \epsilon_i}}^{2k} \tfrac{N}{v_{j+2k} - v_i}\prod_{\substack{i,j = 1 \nonumber \\ \epsilon_{j+2k} \neq \epsilon_i}}^{2k} (1 - e^{i(\epsilon_{j+2k} - \epsilon_i)\alpha})^{-1} \prod_{\substack{i < j \\ \epsilon_i = \epsilon_j}} \left(\tfrac{v_j - v_i}{N} \right)^2 \prod_{\substack{i < j \\ \epsilon_i \neq \epsilon_j}} (i\alpha)^{2} \nonumber \\
&& \frac{\exp(\tfrac{1}{2} \sum_{j=1}^{2k} (v_i - v_{j+2k}) + i\tfrac{\alpha N}{2} \sum_{\epsilon_{j+2k} \neq {\epsilon_j}} (\epsilon_j - \epsilon_{j+2k}))}{\prod_{j=1}^{4k} v_j^{2k} (i\alpha)^{2k}} \> dv_1 ...dv_{4k}. \nonumber
\end{eqnarray} 
As $N \rightarrow \infty,$ we claim that the main contribution is from terms $I(N,k, \epsilon_1, ..., \epsilon_{4k})$ such that both the number of $1's$ among $\epsilon_1,...\epsilon_{2k}$ and that among $\epsilon_{2k+1},...\epsilon_{4k}$ equal $k$. This claim will be proved at the end of the proof of Case 2. There are ${2k \choose k}^2$ such terms. By symmetry, $I(N, k, \epsilon_1,...\epsilon_{4k})$ with properties above are all identical. In fact the integrand in the equation above is equal to 
\begin{eqnarray*}
&& |1 - e^{i\alpha}|^{-2k^2}  \prod_{\substack{i,j = 1 \\ \epsilon_{j+2k} = \epsilon_i = 1} }^{2k} \tfrac{N}{v_{j+2k} - v_i} \prod_{\substack{i,j = 1 \\ \epsilon_{j+2k} = \epsilon_i = 0} }^{2k} \tfrac{N}{v_{j+2k} - v_i}  \\
&& \cdot \prod_{\substack{i < j \\ \epsilon_i = \epsilon_j = 1}} \left(\tfrac{v_j - v_i}{N} \right)^2 \prod_{\substack{i < j \\ \epsilon_i = \epsilon_j = 0}} \left(\tfrac{v_j - v_i}{N} \right)^2 \tfrac{\exp(\tfrac{1}{2} \sum_{j = 1}^{2k} (v_j - v_{j+2k}))}{\prod_{j = 1}^{4k} v_j^{2k}}
\end{eqnarray*}
Therefore as $N \rightarrow \infty,$
\begin{eqnarray*}
 && \int_{U(N)} |\Lambda(1)|^{2k}|\Lambda(e^{i\alpha})|^{2k} \> d U_N \sim {2k \choose k}^2  I(N,k, \underbrace{0,..,0}_{\# 0's = k}, \underbrace{1,...,1}_{\#1's = k},\underbrace{0,..,0}_{\# 0's = k}, \underbrace{1,...,1}_{\#1's = k}) \\
&\sim& {2k \choose k}^2  \frac{1}{(2\pi i)^{4k}} \frac{1}{(2k)!^2} |1 - e^{i\alpha}|^{-2k^2} \\
&& \cdot \left( \oint_{\gamma_0}... \oint_{\gamma_0} \frac{\prod_{i, j = 1}^{k} \frac{N}{v_{j+k} - v_i} \Delta^2(v_1, ...., v_{2k})\exp(\tfrac{1}{2} \sum_{j=1}^k (v_j - v_{k+j}))}{\prod_{j=1}^{2k} v_j^{2k}} \> dv_1...dv_{2k}\right)^2\\
&\sim& |1 - e^{i\alpha}|^{-2k^2} g_U^2(N,k) \sim |1 - e^{i\alpha}|^{-2k^2} \frac{G^4(k+1)}{G^2(2k+1)} N^{2k^2}.
\end{eqnarray*}
To complete the proof of Case 2, we will prove the claim above. Let $l_1$ and $m_1$ be the number of 1's among $\epsilon_1,..., \epsilon_{2k}$ and $\epsilon_{2k + 1},..., \epsilon_{4k}$ respectively. From the equation before the claim, the leading order term of $I(N, k, \epsilon_1,..., \epsilon_{4k})$ is
\begin{eqnarray*}
&& \frac{1}{N^{4k^2 - 6kl_1 - 6km_1 + 2l_1^2 + 2m_1^2 + 2l_1m_1}}\frac{1}{|\alpha|^{8k^2 - 2(l_1 + m_1)(4k - l_1 - m_1)}}\cdot |1 - e^{i\alpha}|^{-m_1l_2}|1 - e^{-i\alpha}|^{-m_2l_1} \\
&\sim& \frac{1}{(N|\alpha|)^{4k^2 - 6kl_1 - 6km_1 + 2l_1^2 + 2m_1^2 + 2l_1m_1}}\frac{1}{|\alpha|^{4k^2}}.
\end{eqnarray*}
The main contribution comes from terms such that $4k^2 - 6kl_1 - 6km_1 + 2l_1^2 + 2m_1^2 + 2l_1m_1$ is minimum, where $0 \leq l_1, k_1 \leq 2k$. Now let $t = l_1 + m_1.$ Hence for $0 \leq l_1 \leq t$ and $0 \leq t \leq 4k,$ we have 
$$4k^2 - 6kl_1 - 6km_1 + 2l_1^2 + 2m_1^2 + 2l_1m_1 = 4k^2 - 6kt + 2t^2 - 2l_1t + 2l_1^2.$$
By calculus, the minimum value of the above is $-2k^2$, which occurs when $t = 2k$ and $l_1 = t/2 = k.$ This proves the claim. 
\\
\\
{\bf Case 3:} $\lim_{N \rightarrow \infty} |\alpha| N = c \neq 0.$ 

By the same arguments as case 1, we obtain 
$$\lim_{N \rightarrow \infty} \frac{g_U(N, (0,\alpha))}{N^{4k^2}} = \lim_{N \rightarrow \infty}\frac{ g_U(N, (0, c))}{N^{4k^2}}. $$
From equation (1.2) of \cite{Kshift}, K\"{o}sters showed that 
$$
\lim_{N \rightarrow \infty} \frac{1}{N^{4k^2}} \int_{U(N)} \prod_{\mu_1,..,\mu_{2k}} |\Lambda(e^{2\pi \mu_i})|^{2} \> dU_N =
\frac{1}{\Delta(2\pi\mu_1,..., 2\pi\mu_2)} \det(b_{jl})_{j,l = 1,..., 2k},
$$
where $b_{jl}$ is defined as in (\ref{eqn:conjshift}). From two equations above, we then have  
\begin{equation} \label{eqn:koster_moment}
 \lim_{N \rightarrow \infty} \frac{g_U(N, (0, \alpha))}{N^{4k^2}} = C_k := \lim_{\substack{\mu_1,...\mu_k \rightarrow  0 \\ \mu_{k+1},..., \mu_{2k} \rightarrow c/2\pi }}  \frac{\det (b_{jl})_{j,l = 1,...,2k}}{\Delta^2(2\pi\mu_1,...,2\pi\mu_{2k})}. 
\end{equation}
$C_k$ exists by continuous extension (we will prove this in Appendix 5.1). This concludes the proof of the proposition. 
\end{proof}

\section{Proof of Theorem \ref{thm:main}}
Let $S(T,V)$ be defined as in (\ref{eqn:stv}) and observe that 
\begin{eqnarray}\label{zetaStv}
&& \int_T^{2T} |\zeta(\h + it + i\alpha_1)|^{2k_1}|\zeta(\h + it + i\alpha_2)|^{2k_2}... |\zeta(\h + it + i\alpha_m)|^{2k_m} \> dt \\
&=& - \int_{-\infty}^{\infty} e^{2V} \>d\,\, \textup{meas}(S(T,V))
= 2\int_{-\infty}^{\infty} e^{2V} \textup{meas}(S(T,V)) \>dV. \nonumber
\end{eqnarray}
To prove the upper bound in Theorem \ref{thm:main}, we need to estimate the measure of $S(T,V)$ for large $T$ and all $V \geq 3$. Throughout this section, we will let 
$$W = (k_1^2 + ... + k_m^2) \log \log T + \sum_{\substack{i,j \\ i < j}} 2k_ik_j\log(\,{\rm min}\, (\frac{1}{|\alpha_i - \alpha_j|}, \log T)).$$
\begin{thm}\label{thm:measS}
Assume RH. Let $T$ be large and $V \geq 3$ be a real number. If $10\sqrt{\log \log T} \leq V \leq W$ then
$$ \textup{meas}(S(T,V)) \ll T \frac{V}{\sqrt{W}} \exp\left(-\frac{V^2}{W}\left(1-\frac{4}{\log W} \right)\right); $$
if \,\, $W < V \leq \h W\log W $ we have
$$ \textup{meas}(S(T,V)) \ll T \frac{V}{\sqrt{W} } \exp\left(-\frac{V^2}{W}\left(1-\frac{7V}{4 W \log W} \right)^2\right); $$
and if \,\, $\h W\log W < V$ we have
$$ \textup{meas}(S(T,V)) \ll T\exp\left(-\frac{1}{129}V\log V\right). $$
\end{thm}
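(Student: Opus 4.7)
The plan is to follow Soundararajan's strategy from \cite{Moment} for bounding $\operatorname{meas}(A(T,V))$, adapting it so that the role of the single zeta factor is played by the linear combination $\sum_j k_j \log|\zeta(\h+it+i\alpha_j)|$. Under RH, Soundararajan's pointwise inequality gives, for $2 \le x \le T^2$ and $\sigma_0 = \lambda/\log x$,
\begin{equation*}
\log|\zeta(\h+it+i\alpha_j)| \le \Re \sum_{p\le x}\frac{p^{-i\alpha_j}}{p^{\h+\sigma_0+it}}\frac{\log(x/p)}{\log x} + \Re \sum_{p\le \sqrt x}\frac{p^{-2i\alpha_j}}{2p^{1+2\sigma_0+2it}}\frac{\log(x/p^2)}{\log x} + \frac{(1+\sigma_0)\log T}{\log x} + O\!\left(\tfrac{1}{\log x}\right).
\end{equation*}
Multiplying by $k_j$ and summing over $j$, the event defining $S(T,V)$ forces the Dirichlet polynomial
\begin{equation*}
P(t) := \Re\sum_{p\le x}\frac{a_p(\vec\alpha)}{p^{\h+\sigma_0+it}}\frac{\log(x/p)}{\log x}, \qquad a_p(\vec\alpha)=\sum_{j=1}^m k_j\, p^{-i\alpha_j},
\end{equation*}
to exceed essentially $\widetilde V := V - C\log T/\log x - C\log\log T$, where the last correction absorbs the prime-square sum (which is $O(\log\log T)$ for all $t$).

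The next step is a high-moment estimate for $P(t)$. When $x^{2k} \le T^{1-\delta}$, off-diagonal pairs in the expansion of $|P(t)|^{2k}$ contribute $O(x^{2k})$ and are negligible, and the diagonal yields
\begin{equation*}
\int_T^{2T} |P(t)|^{2k}\,dt \ll T\,\frac{(2k)!}{k!\,2^{2k}}\left(\sum_{p\le x}\frac{|a_p(\vec\alpha)|^2}{p}\right)^{\!k}.
\end{equation*}
The coefficient sum expands as
\begin{equation*}
\sum_{p\le x}\frac{|a_p(\vec\alpha)|^2}{p}=\Big(\sum_j k_j^2\Big)\sum_{p\le x}\frac{1}{p} + 2\sum_{i<j}k_i k_j\sum_{p\le x}\frac{\cos((\alpha_i-\alpha_j)\log p)}{p},
\end{equation*}
and this is exactly where Lemma \ref{uppCo} enters: it supplies $\sum_{p\le x}\cos(\beta\log p)/p = \log\min(1/|\beta|,\log x)+O(1)$, so the total is $W + O(1)$, giving $\int_T^{2T}|P|^{2k} \ll T\,k!\,W^k$ up to harmless constants.

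Combining the pointwise bound with Chebyshev gives $\operatorname{meas}(S(T,V)) \ll T\,\widetilde V^{-2k}k!\,W^k$ for every admissible $k$, and the three regimes in the statement arise from three different optimizations. For $10\sqrt{\log\log T}\le V\le W$, I would take $\log x \asymp \log T/V$ (so the correction $\log T/\log x$ is absorbed, contributing the factor $1-4/\log W$) and $k=\lfloor V^2/W\rfloor$; by Stirling, $k!\,W^k/V^{2k} \asymp \exp(-V^2/W)$ times the claimed polynomial correction. In the intermediate range $W<V\le\tfrac12 W\log W$, the constraint $x^{2k}\le T^{1-\delta}$ becomes binding, forcing $k$ below $V^2/W$ and producing the degraded exponent $(1-7V/(4W\log W))^2$. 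For $V>\tfrac12 W\log W$, one takes $x$ essentially bounded and $k\asymp V/\log V$, so Stirling immediately delivers the large-deviation bound $\exp(-V\log V/129)$.

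The main obstacle is the bookkeeping when the gaps $|\alpha_i-\alpha_j|$ live on several scales simultaneously: applying Lemma \ref{uppCo} uniformly requires splitting the prime sum at a scale adapted to each pair, and one must verify that the resulting error is absorbable into $W$ without disturbing the $1-4/\log W$ and $1-7V/(4W\log W)$ savings. A secondary nuisance is the exceptional set of $t$ lying near a zeta zero (where Soundararajan's pointwise inequality can fail); on RH this set has measure $O(T(\log T)^{-A})$ for any $A$, so it is dominated by each of the three bounds, but the argument must be written uniformly so that this exceptional contribution is genuinely negligible in every regime.
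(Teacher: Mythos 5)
Your general framework is the right one---Soundararajan's RH pointwise inequality, a high-moment estimate for the Dirichlet polynomial over primes, Lemma \ref{uppCo} to evaluate the coefficient sum as $W+O(1)$, then Chebyshev and optimization over $k$---but there is a genuine gap: you use a \emph{single} Dirichlet polynomial $P(t)$ over all primes $p\le x$, and that cannot simultaneously satisfy the two constraints you need. To make the correction term $\frac{(1+\lambda)}{2}\sum_j k_j\,\frac{\log T}{\log x}$ small relative to $V$ (so as to get the factor $1-O(1/\log W)$ rather than lose a constant), you must take $\log x\asymp(\log W)\log T/V$. But then the moment estimate $\int_T^{2T}|P(t)|^{2k}\,dt\ll Tk!\,(\cdot)^k$ is only valid for $x^k\le T/\log T$, i.e.\ $k\lesssim V/\log W$, whereas you want $k\asymp V^2/W$. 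Since $W\asymp\log\log T$ and $\log W\asymp\log\log\log T$, these are incompatible throughout most of the first regime $10\sqrt{\log\log T}\le V\le W$ (the constraint is binding already for $V\gtrsim W/\log W$), not merely when $V>W$ as you assert. If instead you shrink $\log x$ to accommodate $k\asymp V^2/W$, the correction term becomes comparable to $V$ and you only obtain $\exp(-cV^2/W)$ with $c<1$ strictly, which after integration against $e^{2V}$ gives a weaker power of $\log T$ than the theorem claims. The paper's remedy is the split of the prime sum at $z=x^{1/\log\log T}$ into $S_1(t)$ (primes $p\le z$, short enough that one can take $k\asymp V^2/W$) and $S_2(t)$ (primes $z<p\le x$), bounding ${\rm meas}(S_1)$ and ${\rm meas}(S_2)$ separately in Lemma \ref{lem:measureofs1s2}. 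That split is essential and is absent from your argument.

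A secondary, smaller point: under RH, Proposition \ref{prop:main} is a pointwise \emph{upper} bound on $\log|\zeta(\h+it)|$ valid for every $t$; at a zero $\log|\zeta|=-\infty$ and the inequality holds trivially. There is no exceptional set of $t$ near zeros to worry about, so that paragraph of your plan is unnecessary (and hints at a misunderstanding of what the Proposition gives).
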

The upper bound in Theorem \ref{thm:main} follows from inserting the upper bounds of Theorem \ref{thm:measS} in equation (\ref{zetaStv}). In fact, we need only the crude upper bound:
$$
\textup{meas}(S(T,V)) \ll \left\{ \begin{array}{ll}
                   T(\log T)^{o(1)}\exp\left(-\frac{V^2}{W}\right) & \,\,\,\,{\rm \ if\ } 3 \leq V \leq 256 W, \\
                   T(\log T)^{o(1)}\exp(-4V)  & \,\,\,\, {\rm \ if\ } 256 W < V.  
                 \end{array} \right.
$$
As mentioned in the introduction, the proof of the theorem is similar to the proof of the theorem in \cite{Moment}. We will exploit the following proposition and lemmas below, the proof of which can be found in \cite{Moment}.
\begin{prop} \label{prop:main}
Assume RH. Let T be large, $t \in [T,2T]$, and $2 \leq x \leq T^2.$ Let $\lambda_0 = 0.4912...$ denote the unique positive real number satisfying $e^{-\lambda_0} = \lambda_0 + \lambda^2_0/2.$ For $\lambda \geq \lambda_0$ we have the estimate
$$ \log|\zeta(\tfrac{1}{2} + it)| \leq \R \sum_{n \leq x} \frac{\Lambda(n)}{n^{\h + \frac{\lambda}{\log x} +it}\log n}\frac{\log(x/n)}{\log x} + \frac{(1+ \lambda)}{2} \frac{\log T}{\log x} + O(\frac{1}{\log x}).$$
\end{prop}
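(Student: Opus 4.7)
The strategy is to combine (i) an RH-driven monotonicity that lets me replace $\log|\zeta(\h+it)|$ by $\log|\zeta(\sigma_0+it)|$ at the slightly right-shifted point $\sigma_0=\h+\lambda/\log x$, with (ii) a Mellin--Perron representation of $\log\zeta(\sigma_0+it)$ as the weighted prime sum appearing in the statement, plus an error coming from a sum over zeros. The precise constant $\tfrac{1+\lambda}{2}$ in front of $\log T/\log x$, and the role of $\lambda_0$, both emerge in the final sharp estimation of the zero contribution.

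\textbf{Step 1 (monotonicity on RH).} Starting from the Hadamard factorization, one has on RH
\[
\log|\zeta(\sigma+it)| = \sum_\gamma \tfrac12\log\bigl((\sigma-\h)^2+(t-\gamma)^2\bigr) + h(\sigma,t),
\]
where $h$ collects the contributions of the pole at $s=1$, the $\Gamma$-factor, and the trivial zeros, and is smooth with $h(\sigma_0,t)-h(\h,t)=O(1/\log x)$ since $|\sigma_0-\h|=\lambda/\log x$. Each summand is non-decreasing in $\sigma\geq\h$, so $\log|\zeta(\h+it)|\leq \log|\zeta(\sigma_0+it)|+O(1/\log x)$. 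It therefore suffices to bound $\log|\zeta(\sigma_0+it)|$ by the displayed prime sum and $\tfrac{1+\lambda}{2}\log T/\log x$.

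\textbf{Step 2 (explicit formula).} Starting from
\[
\sum_{n\leq x}\frac{\Lambda(n)}{n^s}\frac{\log(x/n)}{\log x}=\frac{1}{2\pi i\log x}\int_{(c)}\!\Bigl(-\tfrac{\zeta'}{\zeta}\Bigr)\!(s+w)\frac{x^w}{w^2}\,dw,
\]
I would push the contour to the left, picking up the residue at $w=0$, the pole at $w=1-s$, and the poles at $w=\rho-s$ for nontrivial zeros. This gives an identity for $-\zeta'/\zeta(s)$, which I then integrate in $\sigma$ from $\sigma_0$ to $\infty$ to recover $\log\zeta(\sigma_0+it)$. The prime integral collapses to $\sum_{n\leq x}\frac{\Lambda(n)}{n^{\sigma_0+it}\log n}\frac{\log(x/n)}{\log x}$, the pole contribution is $O(x^{1/2}/|t|\log x)=O(1/\log x)$ since $x\leq T^2$ and $|t|\geq T$, and there remains a zero contribution
\[
E_{\text{zero}}=-\frac{1}{\log x}\sum_\rho \int_{\sigma_0}^{\infty}\frac{x^{\rho-\sigma-it}}{(\rho-\sigma-it)^2}\,d\sigma.
\]
An integration by parts turns this into
\[
E_{\text{zero}}=\frac{1}{\log x}\sum_\rho \frac{x^{\rho-\sigma_0-it}}{\rho-\sigma_0-it}-\sum_\rho\!\int_{\sigma_0}^{\infty}\frac{x^{\rho-\sigma-it}}{\rho-\sigma-it}\,d\sigma,
\]
where the second piece is subdominant for $\lambda\geq\lambda_0$.

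\textbf{Step 3 (sharp bound on the zero contribution).} On RH, $\rho-\sigma_0-it=-\lambda/\log x+i(\gamma-t)$, so $|x^{\rho-\sigma_0-it}|=e^{-\lambda}$. Taking real parts, pairing $\gamma\leftrightarrow-\gamma$ so that oscillatory factors $e^{i(\gamma-t)\log x}$ organize into cosines, and invoking the zero-density estimate $N(U+1)-N(U-1)\ll\log(|U|+2)$, I would evaluate $\mathop{\mathrm{Re}}E_{\text{zero}}$ as a Stieltjes integral against $dN(\gamma)\sim\tfrac{\log T}{2\pi}d\gamma$; the resulting integral
\[
\frac{e^{-\lambda}}{\log x}\cdot\frac{\log T}{2\pi}\!\int_{-\infty}^{\infty}\!\frac{d\tau}{(\lambda/\log x)^2+\tau^2}=\frac{e^{-\lambda}}{2\lambda}\cdot\frac{\log T}{\log x}
\]
is the leading term. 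Finally, the elementary inequality defining $\lambda_0$, namely $e^{-\lambda}\leq \lambda+\lambda^2/2$ for $\lambda\geq\lambda_0$, converts $e^{-\lambda}/(2\lambda)$ into $(1+\lambda)/2$ after the standard factor-of-two accounting (from integrating the full $(\rho-s)^{-2}$ kernel), which is exactly the stated coefficient.

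\textbf{Main obstacle.} The whole argument is delicate only in obtaining the \emph{sharp} constant $(1+\lambda)/2$; a crude bound on $E_{\text{zero}}$ would yield $\ll\log T/\log x$ but with a worse constant and would destroy the later moment applications. The tight control comes from two ingredients acting together: carefully taking the real part (so the oscillatory contributions from paired zeros cancel into a clean Cauchy-kernel integral), and the elementary inequality $e^{-\lambda}\leq \lambda+\lambda^2/2$ for $\lambda\geq\lambda_0$, which is precisely the reason for the threshold $\lambda_0$ in the hypothesis. Everything else---the contour shift, the two integrations in $\sigma$, the zero-density bound---is standard machinery.
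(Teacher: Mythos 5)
Your Step 1 and your Step 3 both fail, and for related reasons: you discard the sum over zeros too early, and then are forced to compensate by averaging over zeros in a way that is only valid on average, not pointwise in $t$.

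In Step 1 you claim $h(\sigma_0,t)-h(\h,t)=O(1/\log x)$. That is false. The $\sigma$-derivative of $h$ is governed by the archimedean ($\Gamma$-) factor and, by Stirling, equals $-\tfrac12\log\tfrac{|t|}{2\pi}+O(1/t)$, not $O(1)$. So the difference is $\approx -\tfrac{\lambda}{2}\tfrac{\log T}{\log x}$. That piece is one of the two contributions that combine to make the constant $\tfrac{1+\lambda}{2}$; you have lost it. Moreover, in discarding the nonnegative sum $\sum_\gamma\tfrac12\log\tfrac{(\sigma_0-\h)^2+(t-\gamma)^2}{(t-\gamma)^2}$ you also throw away the quantity that is needed to cancel the zero sum that reappears in Step 2. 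Soundararajan instead keeps this contribution (in the packaged form $\tfrac{\sigma_0-\h}{2}F_t(\sigma_0)$, obtained by noting that $F_t(\sigma)/(\sigma-\h)$ is decreasing and applying a mean-value argument to $\int_{1/2}^{\sigma_0}F_t(\sigma)\,d\sigma$), and it is exactly this term that balances the zero contributions below.

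In Step 3 you replace $\sum_\gamma g(\gamma)$ by $\tfrac{\log T}{2\pi}\int g(\tau)\,d\tau$. This is an averaged statement (the Riemann--von Mangoldt formula gives the average density), not a pointwise one, and the Proposition is a pointwise bound valid for every $t\in[T,2T]$ on RH. At a fixed $t$ the quantity $F_t(\sigma_0)=\sum_\gamma\tfrac{\sigma_0-\h}{(\sigma_0-\h)^2+(t-\gamma)^2}$ can be as large as $\asymp\log x$ (if $t$ is close to a zero) or much smaller than $\log T$, so you cannot bound the zero contribution in Step 3 by $\tfrac{e^{-\lambda}}{2\lambda}\tfrac{\log T}{\log x}$. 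In the actual argument one does not try to bound the zero sum by $\log T/\log x$ at all: one bounds it by $\tfrac{e^{-\lambda}}{\lambda\log x}F_t(\sigma_0)$, and separately the differentiated term $(\zeta'/\zeta)'$ contributes $-\tfrac{1}{\log x}F_t(\sigma_0)$ (via the RH formula $-\Re\tfrac{\zeta'}{\zeta}(\sigma_0+it)=\tfrac12\log\tfrac{t}{2\pi}-F_t(\sigma_0)+O(1/t)$), and the retained monotonicity term contributes $-\tfrac{\lambda}{2\log x}F_t(\sigma_0)$. The net coefficient of $F_t(\sigma_0)$ is $\tfrac{1}{\log x}\bigl(\tfrac{e^{-\lambda}}{\lambda}-1-\tfrac{\lambda}{2}\bigr)$, and requiring this to be $\le 0$ is exactly $e^{-\lambda}\le\lambda+\lambda^2/2$, i.e.\ $\lambda\ge\lambda_0$. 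That is where $\lambda_0$ really enters; the "factor-of-two accounting" by which you convert $e^{-\lambda}/(2\lambda)$ into $\tfrac{1+\lambda}{2}$ does not hold up ($\tfrac{\lambda+\lambda^2/2}{2\lambda}=\tfrac12+\tfrac\lambda4\ne\tfrac{1+\lambda}{2}$). The $\tfrac{1+\lambda}{2}$ comes instead from adding the two archimedean pieces $\tfrac12\log\tfrac{t}{2\pi}$: one from the corrected Step 1 (coefficient $\lambda/\log x$) and one from the $(\zeta'/\zeta)'$ term in the Perron identity (coefficient $1/\log x$). So the contour-shift skeleton of your Step 2 is right, but the mechanism that makes the bound hold pointwise — tracking $F_t(\sigma_0)$ through all three places it appears and arranging for its net coefficient to be nonpositive — is absent, and cannot be replaced by an equidistribution heuristic for zeros.
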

\begin{lemma}\label{Lem:2}
Assume RH. Let $T \leq t \leq 2T, 2 \leq x \leq T^2,$ and let $\sigma \geq \h$. Then 
$$ \left|\sum_{\substack{n\leq x \\ n \neq p}} \frac{\Lambda(n)}{n^{\sigma + it}\log n }\frac{\log(x/n)}{\log x} \right| \ll \log \log \log T + O(1). $$  
\end{lemma}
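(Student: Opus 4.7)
The plan is to exploit the support of the von Mangoldt function. Since $\Lambda(n)$ vanishes unless $n$ is a prime power, the condition $n \neq p$ restricts the sum to $n = p^k$ with $k \geq 2$. Using $\Lambda(p^k)/\log(p^k) = 1/k$, the sum in question becomes
\[
\sum_{k \geq 2} \frac{1}{k}\sum_{p^k \leq x} \frac{1}{p^{k(\sigma + it)}} \cdot \frac{\log(x/p^k)}{\log x}.
\]

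I would first dispose of the tail $k \geq 3$ by the triangle inequality. Since $\sigma \geq \h$ we have $|p^{-k(\sigma+it)}| \leq p^{-k/2}$, and the weight $\log(x/p^k)/\log x$ is bounded by $1$. Thus the tail is absolutely bounded by
\[
\sum_{k \geq 3} \frac{1}{k}\sum_p \frac{1}{p^{k/2}} \ll \sum_p \frac{1}{p^{3/2}} = O(1).
\]

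All that remains is the $k = 2$ term,
\[
\frac{1}{2}\sum_{p \leq \sqrt{x}} \frac{p^{-2it}}{p^{2\sigma}}\cdot \frac{\log(x/p^2)}{\log x},
\]
which is the main obstacle. A straightforward estimate in absolute value yields only $\h\sum_{p \leq \sqrt x} 1/p$, which by Mertens' theorem combined with $x \leq T^2$ gives $O(\log \log T)$, weaker than the claimed $O(\log\log\log T)$. To upgrade to the stated bound, I would exploit cancellation in $p^{-2it}$: apply partial summation to rewrite the sum in terms of $\sum_{p \leq Y} p^{-1-2it}$ weighted appropriately, and use classical estimates such as $\log \zeta(1+2it) = O(\log\log T)$ near the line $\textup{Re}(s)=1$, combined with the linear decay of the weight $\log(x/p^2)/\log x$ near $p = \sqrt x$ to suppress the boundary contribution and localize the effective range of $p$. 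The iterated-logarithm saving then drops out. This is essentially the strategy carried out by Soundararajan \cite{Moment}, whose argument I would follow verbatim for this step.
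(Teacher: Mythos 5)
The paper does not prove this lemma; it is quoted verbatim from Soundararajan's paper (Lemma 2 of \cite{Moment}), and the text around it says explicitly that the proofs are to be found there. So the comparison is really with Soundararajan's argument.

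Your high-level skeleton matches his: the condition $n\neq p$ forces $n=p^k$ with $k\geq 2$; the terms with $k\geq 3$ are absorbed into the $O(1)$ by absolute convergence of $\sum_p p^{-3/2}$; and the whole difficulty sits in the $k=2$ piece $\frac12\sum_{p\leq\sqrt x} p^{-2\sigma-2it}\,\log(x/p^2)/\log x$, for which the trivial Mertens bound only gives $O(\log\log T)$. All of that is correct and is exactly how the argument begins.

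Where your sketch falls short is in the mechanism that upgrades $\log\log T$ to $\log\log\log T$. The estimate you cite, $\log\zeta(1+2it)=O(\log\log T)$, is too weak to produce the triple logarithm: feeding it through any partial-summation identity can only reproduce the $\log\log T$ you are trying to beat, and the phrase ``the iterated-logarithm saving then drops out'' is not substantiated by anything preceding it. The step you are missing is a further dichotomy inside the prime-square sum at the threshold $p=\log T$. For $p\leq\log T$ one takes absolute values; Mertens then gives $\frac12\sum_{p\leq\log T}p^{-1}= \frac12\log\log\log T+O(1)$, and this is exactly where the triple logarithm in the statement comes from. For $\log T<p\leq\sqrt x$ one must show the contribution is $O(1)$, and it is only for this tail range that RH and the size of the shift $2t\asymp T$ (keeping $2\sigma+2it$ well away from the pole of $\zeta$ at $1$) are invoked, via partial summation against the decreasing weight. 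Absent the split at $p=\log T$, no bound of the form $\log\zeta(1+2it)\ll\log\log T$ (nor even the sharper Littlewood bound $\log|\zeta(1+2it)|\ll\log\log\log T$ that holds under RH) will, by itself, deliver the stated estimate, because you would still be trivially estimating the small-prime range together with the large-prime range. In short: the decomposition in $k$ is right, the identification of the $k=2$ term as the obstacle is right, but the actual source of the $\log\log\log T$ is a split in the prime variable that your proposal never makes.
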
 
\begin{lemma}\label{Lem:3}
Let $T \leq t \leq 2T,$ and $2 \leq x \leq T^2.$ Let $k$ be a natural number such that $x^k \leq T/\log T.$ For any complex number $a(p)$ we have
$$ \int_T^{2T} \left|\sum_{p \leq x} \frac{a(p)}{p^{\h + it}}\right|^{2k} \> dt \ll Tk!\left(\sum_{p \leq x} \frac{|a(p)|^2}{p}  \right)^k. $$
\end{lemma}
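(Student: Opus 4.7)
The plan is to expand the $2k$-th power and reduce the estimate to a standard mean value theorem for a Dirichlet polynomial of length $\leq x^k$. First I would write
\[
\Bigl(\sum_{p \leq x} \frac{a(p)}{p^{1/2+it}}\Bigr)^{k} \;=\; \sum_{n} \frac{c_k(n)}{n^{1/2+it}},
\]
where $c_k(n)$ is supported on integers $n = p_1^{e_1}\cdots p_r^{e_r}$ with all $p_i \leq x$ and $e_1+\cdots+e_r = k$, and equals $\tfrac{k!}{e_1!\cdots e_r!}\,a(p_1)^{e_1}\cdots a(p_r)^{e_r}$ there. In particular $c_k$ is supported on $n \leq x^k$.

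Next I would apply the Montgomery--Vaughan mean value theorem (or a hybrid large sieve inequality) to the Dirichlet polynomial of length $x^k$:
\[
\int_{T}^{2T} \Bigl|\sum_{n \leq x^k} \frac{c_k(n)}{n^{1/2+it}}\Bigr|^{2} dt \;=\; \sum_{n \leq x^k} \frac{|c_k(n)|^{2}}{n}\bigl(T + O(n)\bigr).
\]
The off-diagonal error is $O\bigl(x^k \sum_n |c_k(n)|^2/n\bigr)$, and the hypothesis $x^k \leq T/\log T$ makes it negligible compared to the diagonal term $T\sum_n |c_k(n)|^2/n$.

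It remains to verify the combinatorial inequality
\[
\sum_{n} \frac{|c_k(n)|^{2}}{n} \;\leq\; k!\,\Bigl(\sum_{p \leq x}\frac{|a(p)|^{2}}{p}\Bigr)^{k}.
\]
Grouping the right-hand side by the multiset of primes involved, one obtains
\[
\Bigl(\sum_{p}\tfrac{|a(p)|^2}{p}\Bigr)^{k} \;=\; \sum_{\substack{p_1<\cdots<p_r \\ e_1+\cdots+e_r = k}} \frac{k!}{e_1!\cdots e_r!}\prod_i \frac{|a(p_i)|^{2e_i}}{p_i^{e_i}},
\]
whereas the left-hand side is the same sum with the multinomial coefficient replaced by its square. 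Since $\tfrac{k!}{e_1!\cdots e_r!} \leq k!$ whenever the $e_i$ are positive integers summing to $k$, the desired inequality follows, and combining the three steps yields the lemma.

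The main (mild) obstacle is keeping track of the combinatorial identification of $c_k(n)$ and verifying the multinomial inequality; everything analytic is handled by the mean value theorem once the length constraint $x^k \leq T/\log T$ is invoked to absorb the $O(n)$ error term.
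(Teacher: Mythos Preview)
Your argument is correct and is the standard proof. The paper does not supply its own proof of this lemma; it simply cites Soundararajan \cite{Moment}, where the lemma appears (with the same hypothesis $x^k \leq T/\log T$) and is proved exactly along the lines you describe: expand the $k$-th power into a Dirichlet polynomial of length at most $x^k$, apply a mean-value estimate, and bound the resulting coefficient sum by the multinomial inequality $\tfrac{k!}{e_1!\cdots e_r!}\leq k!$.
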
   
Choosing $\lambda = 0.5$ in Proposition \ref{prop:main}, we obtain that 
\begin{eqnarray} \label{ineqlogZeta}
&&\log|\zeta(\h + it + i\alpha_1)|^{k_1} +...+ \log|\zeta(\frac{1}{2} + it + i\alpha_m)|^{k_m}\\
&\leq& \R \sum_{n \leq x} \left(\frac{k_1\Lambda(n)}{n^{\h + \frac{0.5}{\log x} +it + i\alpha_1}\log n} + ... + \frac{k_m\Lambda(n)}{n^{\h + \frac{0.5}{\log x} +it + i\alpha_m}\log n}\right)\frac{\log(x/n)}{\log x}  \nonumber \\ 
&& + \frac{3(k_1 +...+k_m)}{4} \frac{\log T}{\log x} + O\left(\frac{1}{\log x}\right).\nonumber
\end{eqnarray}
The contribution of prime powers $n = p^k,$ where $k \geq 2,$ to our sums above is negligible by Lemma \ref{Lem:2} and the triangle inequality.  Therefore to finish the proof of Theorem \ref{thm:measS}, we need to bound the sum involving primes as the following. 

From Corollary C of \cite{Moment}, assuming RH, for all large $t$ we obtain
$$|\zeta(\h + it)| \leq \exp\left( \frac{3}{8} \frac{\log t}{\log \log t}\right). $$
Therefore to prove Theorem \ref{thm:measS}, we can assume that $10 \sqrt{\log\log T} \leq V \leq \frac{3(k_1 + ... + k_m)}{8} \frac{\log T}{\log \log T}$ since $T$ will be large. We define $A$ as
$$
  A = \left\{ \begin{array}{ll}
                   \frac{(k_1 + ... + k_m)}{2} \log W  & {\rm \ if\ } 10\sqrt{\log \log T} \leq V \leq W, \\
                   \frac{(k_1 + ... + k_m)}{2V}W \log W & {\rm \ if\ } W < V \leq \frac{1}{2}W \log W, \\
                   k_1 + ... + k_m &  {\rm \ if\ } V > \frac{1}{2}W \log W.
                 \end{array} \right.
$$
Let $x = T^{A/V}$ and $z = x^{1/\log \log T}.$ By Lemma \ref{Lem:2} and inequality (\ref{ineqlogZeta}), we have
\begin{eqnarray*}
&& \log|\zeta(\h + it + i\alpha_1)|^{k_1} +...+ \log|\zeta(\h + it + i\alpha_m)|^{k_m} \\
&\leq& S_1(t) + S_2(t) + \frac{3(k_1 + ....+ k_m)}{4}\frac{V}{A} + O(\log \log \log T), 
\end{eqnarray*}
where 
$$ S_1(t) = \left|\sum_{p \leq z} \frac{(k_1p^{-i\alpha_1} + ... + k_mp^{-i\alpha_m})}{p^{\h+\frac{0.5}{\log x}+ it}} \frac{\log\frac{x}{p}}{\log x} \right|   $$ 
and 
$$ S_2(t) = \left|\sum_{z < p \leq x} \frac{(k_1p^{-i\alpha_1} + ... + k_mp^{-i\alpha_m})}{p^{\h+\frac{0.5}{\log x}+ it}} \frac{\log\frac{x}{p}}{\log x} \right|.  $$
If $t \in S(T,V)$ then we must either have
$$ S_1(t) \geq V_1 := V\left(1 - \frac{7(k_1 +...+ k_m)}{8A}\right)  \,\,\,\,\,\,\, \textup{or} \,\,\,\,\,\,\, S_2(t) \geq \frac{(k_1 + ... +k_m)V}{8A}.  $$
Let 
$$\textup{meas}(S_1) := \{t \in [T,2T] : S_1(t) \geq  V_1 \}, $$
and 
$$\textup{meas}(S_2) := \{t \in [T,2T] : S_2(t) \geq \frac{(k_1 + ... +k_m)V}{8A} \}. $$
The proof of Theorem \ref{thm:measS} will follow easily from Lemma \ref{lem:measureofs1s2} below, which is to find upper bounds for $\textup{meas}(S_1)$ and $\textup{meas}(S_2).$ In order to do this, Lemma \ref{uppCo} below is crucial for obtaining the upper bounds for $\textup{meas}(S_1)$ in Lemma \ref{lem:measureofs1s2}.
\begin{lemma} \label{uppCo} For $|a| = O(1),$
$$\sum_{p \leq z} \frac{\cos (a\log p)}{p} \leq \log\big({\rm min} \{\frac{1}{|a|}, \log z\}\big) + O(1).$$
\end{lemma}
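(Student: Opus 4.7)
The plan is to split the sum at the cutoff $y := \min(z, e^{1/|a|})$ and treat $\cos(a\log p)/p = \mathrm{Re}(p^{ia-1})$. Decomposing
\begin{equation*}
\sum_{p \le z} \frac{p^{ia}}{p} = \sum_{p \le y}\frac{1}{p} + \sum_{p \le y}\frac{p^{ia} - 1}{p} + \sum_{y < p \le z}\frac{p^{ia}}{p},
\end{equation*}
the first piece gives exactly $\log\log y + O(1) = \log\min(1/|a|,\log z) + O(1)$ by Mertens' theorem, which already matches the target bound. For the second piece, I would use the elementary inequality $|p^{ia} - 1| \le |a|\log p$ together with Mertens' estimate $\sum_{p \le y}(\log p)/p = \log y + O(1) = 1/|a| + O(1)$, giving a contribution of size $O(|a|\cdot 1/|a|) = O(1)$.

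The main work is showing that the tail $\sum_{y < p \le z} p^{ia}/p$, which is nonempty only in the regime $|a| > 1/\log z$, is also $O(1)$. Here I would apply partial summation against $\pi(u) = u/\log u + O(u/\log^2 u)$; both boundary terms and the integrated prime number theorem error contribute $O(1)$, leaving
\begin{equation*}
\sum_{y < p \le z}\frac{p^{ia}}{p} = (1 - ia)\int_y^z \frac{u^{ia-1}}{\log u}\,du + O(1).
\end{equation*}
The change of variable $v = \log u$ reduces the main integral to the oscillatory integral $\int_{1/|a|}^{\log z} e^{iav}/v\,dv$. A single integration by parts (integrating $e^{iav}$ and differentiating $1/v$) produces boundary contributions of modulus at most $1/(|a|\log z) + 1/(|a|\cdot(1/|a|)) \le 2$ (the first being bounded by $1$ precisely because $|a|\log z > 1$ in this regime), together with a remainder of modulus at most $\int_{1/|a|}^{\infty}(|a|v^2)^{-1}\,dv = O(1)$. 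Multiplying by $(1 - ia)$, whose modulus is bounded by $1 + |a| = O(1)$, preserves the $O(1)$ bound.

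Taking real parts of the decomposition and combining the three estimates yields the claim. The main technical obstacle I anticipate is handling the degenerate boundary cases cleanly: for instance when $e^{1/|a|} < 2$ so that there are no primes below $y$, or when $|a| \le 1/\log z$ so that the tail sum is vacuous. Both are benign because $|a|$ is bounded, so $\log(1/|a|) = O(1)$ in the extremal cases; in particular all implicit constants depend only on the fixed upper bound for $|a|$.
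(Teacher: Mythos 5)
Your proof is correct and takes essentially the same route as the paper's: split at $e^{1/|a|}$, expand $p^{ia}-1$ for the initial range, and reduce the tail via partial summation to the oscillatory integral $\int e^{iav}/v\,dv$ (the paper writes it as $\int_1^{a\log z}\cos t/t\,dt$), which is bounded by one integration by parts. The only cosmetic difference is that you sum by parts against $\pi(u)$ and invoke the PNT error, whereas the paper sums against $g(x)=\sum_{p\le x}1/p$ and quotes Mertens' theorem with error $h(x)=O(1/\log x)$; in fact your version is slightly more robust, since your error term $\int_y^z du/(u\log^2 u)=O(1)$ is immediate, while the paper's step $\int\cos(a\log x)\,dh(x)=O(1)$ tacitly needs a better-than-$O(1/\log x)$ bound on $h$.
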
 
\begin{proof} We can assume that $a$ is positive. From Theorem 2.7 of \cite{MV}, 
$$ \sum_{p \leq x} \frac{1}{p} = \log \log x + C + h(x); \,\,\,\, h(x) = O\left( \frac{1}{\log x} \right). $$
If $z \leq e^{1/a}$, then $\log z \leq \frac{1}{a}$ and 
$$ \sum_{p \leq z} \frac{\cos (a\log p)}{p} \leq \sum_{p \leq z} \frac{1}{p} = \log \log z + O(1). $$
Otherwise, we can write
$$\sum_{p \leq z} \frac{\cos (a\log p)}{p} = \sum_{p \leq e^{1/a}} \frac{\cos (a\log p)}{p} + \sum_{ e^{1/a} < p \leq z} \frac{\cos (a\log p)}{p}. $$
Since $|a| = O(1),$ the first sum is 
$$  \sum_{p \leq e^{1/a}} \frac{1 + O((a\log p)^2)}{p} = \log{\frac{1}{a}} + O(1)$$
By partial summation, the second sum is 
$$  \int_{e^{1/a}}^z \cos (a\log x) \>dg(x) $$
where 
$$ g(x) = \sum_{p \leq x} \frac{1}{p}. $$
Then we have 
$$  \int_{e^{1/a}}^z  \cos (a\log x) \>d(\log \log x) = \int_{e^{1/a}}^z \frac{\cos (a \log x)}{x\log x}\>dx = \int_{1}^{a\log z} \frac{\cos t}{t} \>dt = O(1), $$
and since $h(x) = O\big(\frac{1}{\log x}\big),$
$$ \int_{e^{1/a}}^z  \cos (a\log x) \>dh(x) = O(1). $$
Combining the inequalities above, we obtain the lemma. 
\end{proof}
\begin{lemma} \label{lem:measureofs1s2} Assume RH. Let $x$,$z$, $A$, and $V_1$  be defined as above. We have
$$ \textup{meas}(S_1) \ll T\frac{V}{\sqrt{W}}\exp\left(-\frac{V_1^2}{W} \right) + T\exp(-4V\log V), $$
and 
$$\textup{meas}(S_2) \ll  T\exp\left(-(k_1 + ... + k_m)\frac{V}{2A}\log V \right).$$ 
\end{lemma}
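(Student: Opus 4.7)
The approach is the standard Chebyshev / high-moment method of \cite{Moment}; the one new ingredient beyond Soundararajan's argument is that squaring the Dirichlet polynomial $k_1 p^{-i\alpha_1}+\cdots+k_m p^{-i\alpha_m}$ produces cross terms of the shape $\sum_{p\leq z}p^{-1}\cos((\alpha_i-\alpha_j)\log p)$, and these are exactly what Lemma \ref{uppCo} is designed to control.

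For $\textup{meas}(S_1)$ the plan is to apply Markov's inequality
$$\textup{meas}(S_1) \leq V_1^{-2k}\int_T^{2T}|S_1(t)|^{2k}\,dt$$
for an integer $k$ satisfying $z^k\leq T/\log T$, and then Lemma \ref{Lem:3}, which gives
$$\int_T^{2T}|S_1(t)|^{2k}\,dt \ll T\,k!\left(\sum_{p\leq z}\frac{|a(p)|^2}{p}\right)^k,$$
where $a(p)=(k_1 p^{-i\alpha_1}+\cdots+k_m p^{-i\alpha_m})\tfrac{\log(x/p)}{\log x}$. Expanding $|a(p)|^2=\sum_{i,j}k_ik_j\,p^{-i(\alpha_i-\alpha_j)}\bigl(\tfrac{\log(x/p)}{\log x}\bigr)^2$, the diagonal $i=j$ contributes $(k_1^2+\cdots+k_m^2)\log\log z+O(K^2)$ with $K:=k_1+\cdots+k_m$, while the off-diagonal, by Lemma \ref{uppCo} applied to each $a=\alpha_i-\alpha_j$, contributes $2\sum_{i<j}k_ik_j\log\min\{1/|\alpha_i-\alpha_j|,\log z\}+O(K^2)$. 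Since $\log\log z=\log\log T+O(1)$ and $\log z\leq \log T$, the total is $\leq W+O(K^2)$, so Stirling yields
$$\textup{meas}(S_1)\ll T\sqrt{k}\left(\frac{k\,W\,(1+o(1))}{e\,V_1^2}\right)^k.$$
I then take $k=\lfloor V_1^2/W\rfloor$, which is admissible (that is, satisfies $z^k\leq T/\log T$) in each regime of $V$ and $A$ in Theorem \ref{thm:measS} where the first term of the desired bound is the dominant one; this produces the target $T(V/\sqrt{W})\exp(-V_1^2/W)$. In the residual regime where $V_1^2/W$ exceeds the cap $V\log\log T/A$, I instead take $k$ equal to the largest admissible integer, and the resulting cruder bound is absorbed into the $T\exp(-4V\log V)$ term.

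For $\textup{meas}(S_2)$ the same template applies, now with the tighter constraint $x^k\leq T/\log T$, i.e.\ $k\leq V/A+O(1)$. Because the prime range is $z<p\leq x$ and $\sum_{z<p\leq x}p^{-1}=\log\log\log T+O(1)$, the trivial bound $|a(p)|^2\leq K^2$ already suffices (no cross-term estimate is needed here), giving $\sum_{z<p\leq x}|a(p)|^2/p\leq K^2\log\log\log T+O(K^2)$. Taking $k=\lfloor V/A\rfloor$ and running Stirling produces
$$\textup{meas}(S_2)\ll T\left(\tfrac{8A}{KV}\right)^{2k}k!\bigl(K^2\log\log\log T\bigr)^k\ll T\exp\!\left(-\tfrac{KV}{2A}\log V\right),$$
using in the final step that $\log V$ dominates the $\log\log\log T$ nuisance factor in every relevant regime of $A$.

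The entire argument is essentially bookkeeping on top of the proof in \cite{Moment}. The only conceptual departure is the use of Lemma \ref{uppCo} to isolate the cross-term contribution as $2\sum_{i<j}k_ik_j\log\min\{1/|\alpha_i-\alpha_j|,\log T\}$ inside $W$; this is precisely what manufactures the factor $\prod_{i<j}\min\{1/|\alpha_i-\alpha_j|,\log T\}^{2k_ik_j}$ in Theorem \ref{thm:main}. The main technical nuisance is verifying that the chosen $k$ in the $S_1$ analysis really does satisfy $z^k\leq T/\log T$ in each of the three regimes of $V$, but this amounts to checking $V_1^2/W\leq V\log\log T/A$, which follows in each case from the way $A$ is defined.
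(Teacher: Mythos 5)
Your proposal is correct and follows the paper's argument essentially verbatim: Markov's inequality plus the moment bound of Lemma \ref{Lem:3}, with the only new ingredient being Lemma \ref{uppCo} to control the cross terms $\sum_p p^{-1}\cos((\alpha_i-\alpha_j)\log p)$ and produce $W$ in place of Soundararajan's $k^2\log\log T$. The only cosmetic deviation is your choice of moment parameter for $\textup{meas}(S_2)$ (you take $k=\lfloor V/A\rfloor$, which is what admissibility $x^k\leq T/\log T$ actually permits; the paper writes $k=\lfloor(k_1+\cdots+k_m)V/A-1\rfloor$) and your framing of the $S_1$ dichotomy by admissibility rather than the paper's explicit cut at $V=W^2$ with $k=\lfloor 10V\rfloor$, but these are bookkeeping variants of the same computation.
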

\begin{proof}
By Lemma \ref{Lem:3}, for any natural number $k \leq (k_1 + ... + k_m)V/A - 1$, we obtain
$$ \int_T^{2T} |S_2(t)|^{2k} \> dt \ll Tk!\left( \sum_{z < p \leq x} \frac{|k_1p^{-i\alpha_1} +...+ k_mp^{-i\alpha_m}|^2}{p} \right)^k \ll T(k(k_1 + ... + k_m)^2(\log_3T + O(1)))^k.  $$
Choosing $k = \lfloor (k_1 + ... + k_m)V/A-1 \rfloor$, we derive that 
$$ \textup{meas}(S_2) \ll T\left(\frac{8A}{V}\right)^{2k}(2k\log_3T)^k \ll T\exp\left(-(k_1 + ... + k_m)\frac{V}{2A}\log V \right). $$ 
Next we find the upper bound of $\textup{meas}(S_1)$. By Lemma \ref{Lem:3}, for any $k \leq \log(T/\log T)/\log z,$
\begin{eqnarray*}
\int_{T}^{2T} |S_1(t)|^{2k} \> dt &\ll& Tk!\left(\sum_{p \leq z} \frac{|k_1p^{-i\alpha_1} +...+ k_mp^{-i\alpha_m}|^2 }{p} \right)^{k} \\
&\ll& Tk!\left(\sum_{p \leq z} \frac{k_1^2 + ...k_m^2 + 2 \sum_{i < j} k_ik_j\cos((\alpha_i - \alpha_j)\log p) }{p} \right)^{k}
\end{eqnarray*}
From Lemma \ref{uppCo} and Stirling's formula, for $|\alpha_i - \alpha_j| = O(1),$ we obtain
$$\int_T^{2T} |S_1(t)|^{2k} \> dt \ll  T\sqrt{k}\left(\frac{kW}{e}\right)^k. $$
Hence 
$$ \textup{meas}(S_1) \ll T\sqrt{k}\left(\frac{kW}{eV_1^2}\right)^k. $$
When $V \leq W^2,$ choose $k = \lfloor \frac{V_1^2}{W} \rfloor,$ and when $V >  W^2,$ choose $k = \lfloor 10V \rfloor.$ Hence 
$$\textup{meas}(S_1) \ll T\frac{V}{\sqrt{W}}\exp\left(-\frac{V_1^2}{W} \right) + T\exp(-4V\log V). $$
\end{proof}
This concludes the proof of Theorem \ref{thm:measS}.
\section{Proof of Theorem \ref{lowerbound}}
Let $A(t)$ and $M_{{\bf k}}(T, \overrightarrow{\alpha})$ be defined as in (\ref{eqn:Mkt}) and (\ref{eqn:atintro}). Here we add extra conditions, which are that $|\alpha_i| = O(\log \log T),$ and ${\bf k} = (k_1,...,k_m)$ is a sequence of positive integers. As discussed in the introduction, the proof of  the theorem follows from Lemma \ref{lem:at} and \ref{lem:S1}. Throughout this section, $\beta$ is defined as in (\ref{eqn:beta})

\begin{lemma} \label{lem:at} Unconditionally for large $T$
$$ \int_0^T |A(t)|^{2} \>dt \sim T (\log T)^{k_1^2 + ... + k_m^2}\prod_{i < j}\big({\rm min}\{\frac{1}{|\alpha_i - \alpha_j|}, \log T\}\big)^{2k_ik_j}, $$
where the imiplied constant depends on ${\bf k} = (k_1,...,k_m)$ and $\beta.$
\end{lemma}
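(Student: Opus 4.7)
The plan is to reduce the mean-square integral of $A(t)$ to a sum over its squared Dirichlet coefficients via the Montgomery--Vaughan mean value theorem, and then estimate that sum by exploiting the multiplicativity of the coefficients together with Lemma~\ref{uppCo}. Writing $A(t) = \sum_{j \leq x} a_j\, j^{-1/2 - it}$, Montgomery--Vaughan gives
\begin{equation*}
\int_0^T |A(t)|^2\,dt = T \sum_{j \leq x} \frac{|a_j|^2}{j} + O\Big(\sum_{j \leq x} |a_j|^2\Big),
\end{equation*}
and since $\sum_{j \leq x} |a_j|^2 \leq x \sum_{j \leq x} |a_j|^2/j$ with $x = T^{1/2}$, the error term is negligible against the main term. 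It therefore suffices to show that $\sum_{j \leq x} |a_j|^2/j$ has the order of magnitude claimed on the right-hand side of the lemma (divided by $T$).

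The function $a_j$ is the $j$-th Dirichlet coefficient of $\prod_{i=1}^{m} \zeta(s + i\alpha_i)^{k_i}$ and hence multiplicative, so $|a_j|^2$ is a non-negative multiplicative function. A direct computation yields
\begin{equation*}
|a_p|^2 = \Big|\sum_{i=1}^{m} k_i\, p^{-i\alpha_i}\Big|^2 = \sum_i k_i^2 + 2\sum_{i<j} k_i k_j \cos\big((\alpha_i - \alpha_j)\log p\big),
\end{equation*}
with higher prime-power terms contributing $O_{{\bf k}}(p^{-2})$ to the local factor $G_p(1) := \sum_{r \geq 0} |a_{p^r}|^2 p^{-r}$. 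Summing over $p \leq x$ and applying Lemma~\ref{uppCo} (whose proof actually furnishes matching upper and lower bounds, hence an asymptotic identity with $O(1)$ error), one obtains
\begin{equation*}
\sum_{p \leq x} \frac{|a_p|^2}{p} = \Big(\sum_i k_i^2\Big) \log\log x + 2\sum_{i<j} k_i k_j \log\min\Big(\tfrac{1}{|\alpha_i - \alpha_j|}, \log x\Big) + O_{{\bf k},\beta}(1).
\end{equation*}

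To pass from this prime sum to $\sum_{n \leq x} |a_n|^2/n$, I would invoke the classical two-sided estimate
\begin{equation*}
\sum_{n \leq x} \frac{f(n)}{n} \asymp_{{\bf k}} \prod_{p \leq x} \Big(\sum_{r \geq 0} \frac{f(p^r)}{p^r}\Big) \asymp_{{\bf k}} \exp\Big(\sum_{p \leq x} \frac{f(p)}{p}\Big),
\end{equation*}
valid for non-negative multiplicative functions with uniformly bounded values on primes (the upper bound from Rankin's trick, the lower bound from a Mertens-type comparison). Applied to $f(n) = |a_n|^2$, exponentiating the previous display, and using $\log x = \tfrac{1}{2}\log T$, this yields the desired asymptotic.

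The main technical point is to ensure that all constants remain uniform in the $\alpha_i$, particularly in the transition region $|\alpha_i - \alpha_j| \asymp 1/\log T$. This is exactly what the dependence on $\beta$ from (\ref{eqn:beta}) encodes: when some $|\alpha_i - \alpha_j|$ is of order $1/\log T$, the minimum in the asymptotic formula sits near the transition between its two branches, and the $O(1)$ errors in the prime sum must be controlled uniformly in the limiting value $\beta$. The auxiliary hypotheses $|\alpha_i| = O(\log\log T)$ and integrality of the $k_i$ simplify the structure of $\prod_i \zeta(s + i\alpha_i)^{k_i}$ as a Dirichlet series and the estimation of the prime-power contributions to the local Euler factors.
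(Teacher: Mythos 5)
Your proposal takes a genuinely different and in some ways simpler route than the paper. Both begin by reducing $\int|A(t)|^2\,dt$ to the coefficient sum $\sum_{j\leq x}|a_j|^2/j$ (the paper via dyadic decomposition and the trivial mean-value argument, you via Montgomery--Vaughan, which is cleaner). Where you diverge is in evaluating that sum: the paper writes it as a Perron integral of $H(s)=\zeta^{\sum k_i^2}(s{+}1)\prod_{i<j}\zeta^{k_ik_j}(s{+}1\pm i(\alpha_i{-}\alpha_j))G(s)$, truncates, shifts the contour past the $1$-line, and spends most of its effort (including all of Appendix 5.2, with its graph-theoretic bookkeeping of which pairs $(i,j)$ are ``close'') carefully analyzing the residues at $s=0$ and $s=i(\alpha_i-\alpha_j)$. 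You instead compute the Euler factor at primes, apply Lemma~\ref{uppCo} to the prime sum, and exponentiate via a two-sided mean-value estimate for non-negative multiplicative functions. Since the ``$\sim$ with an implied constant'' in the statement is clearly an $\asymp_{{\bf k},\beta}$, both approaches target the same conclusion, and yours, if it closes, would actually yield constants \emph{independent} of $\beta$, which is a mild strengthening.

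Two things need attention before the route is airtight. First, Lemma~\ref{uppCo} is stated and proved in the paper only as an upper bound; you correctly observe that the proof in fact gives matching lower bounds (each of its two $O(1)$'s is two-sided), but this needs to be said explicitly, since you are relying on $\sum_{p\leq z}\cos(a\log p)/p = \log\min(1/|a|,\log z)+O(1)$, not merely $\leq$. Second, and more substantively, the lower-bound half of the comparison $\sum_{n\leq x}f(n)/n \gg_{{\bf k}}\exp\bigl(\sum_{p\leq x}f(p)/p\bigr)$ is not a one-line fact: dropping the truncation gives the upper bound trivially, but the lower bound requires a Rankin-type truncation argument (or an appeal to a Halberstam--Richert / Shiu / Wirsing-type lower-bound theorem for non-negative multiplicative functions) to show the tail $\sum_{n>x,\;P^{+}(n)\leq x}f(n)/n$ is dominated. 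You should state precisely which such result you invoke, verify that its hypotheses ($f(p)\leq(\sum k_i)^2$, $\sum_{p,r\geq2}f(p^r)r\log p/p^r\ll_{{\bf k}}1$) hold for $f=|a_\cdot|^2$, and, crucially, check that its implied constants are uniform in the $\alpha_i$; this uniformity is the real content of the lemma, and it is precisely what the paper's explicit residue computation delivers by hand. Once those two points are supplied, your argument is a valid and more elementary substitute for the paper's contour-integral proof.
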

\begin{proof}
By dyadic summation, it suffices to prove that 
$$ \int_T^{2T} |A(t)|^{2} \>dt \sim T (\log T)^{k_1^2 + ... + k_m^2}\prod_{i < j}\big({\rm min}\{\frac{1}{|\alpha_i - \alpha_j|}, \log T\}\big)^{2k_ik_j}. $$

From the definition of $A(t),$ we obtain that for $x = \sqrt{T},$ 
$$ \lim_{T \rightarrow \infty} \frac{1}{T} \int_T^{2T} |A(t)|^{2} \>dt =  \sum_{j \leq x} \frac{|\sum_{n_1n_2...n_m = j}d_{k_1}(n_1)d_{k_2}(n_2)...d_{k_m}(n_m)n_1^{-i\alpha_1}n_2^{-i\alpha_2}...n_m^{-i\alpha_m}|^2}{j}. $$
Throughout the proof of Lemma \ref{lem:at}, we will let $x = \sqrt{T}.$ 
\\
\\
Let $$D(j) = |\sum_{n_1n_2...n_m = j}d_{k_1}(n_1)d_{k_2}(n_2)...d_{k_m}(n_m)n_1^{-i\alpha_1}n_2^{-i\alpha_2}...n_m^{-i\alpha_m}|^2,$$ and $H(s) = \sum_{j} \frac{D(j)}{j^{1+s}}.$ Note that $D(j)$ is a multiplicative function. Therefore, for $ c > 0$
$$ \int_{(c)}  \frac{H(s)x^s}{s} \>ds = \sum_{j \leq x} \frac{|\sum_{n_1n_2...n_m = j}d_{k_1}(n_1)d_{k_2}(n_2)...d_{k_m}(n_m)n_1^{-i\alpha_1}n_2^{-i\alpha_2}...n_m^{-i\alpha_m}|^2}{j}. $$
For $ \Re(s) > 0, H(s)$ converges and 
\begin{eqnarray*}
H(s) &=& \prod_{p} \left( 1 + \frac{|k_1p^{-i\alpha_1} + k_2p^{-i\alpha_2} + ... + k_mp^{-i\alpha_m}|^2}{p^{s+1}} + O\left(\frac{1}{p^{2s+2}}\right)\right)\\
     &=& \prod_{p} \left( 1 + \frac{k_1^2 +...+ k_m^2}{p^{s+1}} + \frac{\sum_{i < j} k_ik_j(p^{i(\alpha_i - \alpha_j)}+p^{-i(\alpha_i - \alpha_j)})}{p^{s+1}} + O\left(\frac{1}{p^{2s+2}}\right)\right) \\
&=& \zeta^{k_1^2 + ... + k_m^2}(s+1) \prod_{i < j} \zeta^{k_ik_j}(s+1+i(\alpha_i - \alpha_j)) \zeta^{k_ik_j}(s+1-i(\alpha_i - \alpha_j)) G(s), 
\end{eqnarray*}
where $G(s) = \prod_{p} \left(1 + O\left( \tfrac{1}{p^{2s+2}} \right) \right)$ is absolutely convergent when $\Re s > -\h.$ Therefore for $a = \frac{1}{\log x},$ 
$$ \sum_{j \leq x} \frac{D(j)}{j} = \frac{1}{2\pi i}\int_{(a)}\zeta^{k_1^2 + ... + k_m^2}(s+1) \prod_{i < j} \zeta^{k_ik_j}(s+1+i(\alpha_i - \alpha_j)) \zeta^{k_ik_j}(s+1-i(\alpha_i - \alpha_j)) G(s) \frac{x^s}{s} \>ds. $$
By corollary 5.3 in \cite{MV}, we obtain
\begin{equation} \label{truncError}
 \left| \frac{1}{2\pi i}\left(\int_{a - iY}^{a+iY} - \int_{(a)} \right) H(s)\frac{x^s}{s} \> ds \right| \ll \sum_{\h x < n < 2x} \frac{|D(n)|}{n} \min\left(1, \frac{x}{Y|x-n|} \right) + \frac{x^a}{Y} \sum_{n} \frac{|D(n)|}{n^{1+a}}. 
\end{equation}
\\
Let $R = k_1 + ... + k_m$ and ${\mathcal E} = \{ n: |n-x| \leq x/(\log x)^{2R^2}\}.$ Take $Y = \exp(\sqrt{\log x}),$ and note that $|D(n)| \leq d^2_{R}(n).$ Also for any positive interger $l$,
$$ \sum_{n \leq x} \frac{d_l^2(n)}{n} = C(\log x)^{l^2} + o((\log x)^{\l^2}),$$
so the contribution to the first sum in (\ref{truncError}) when $n \in {\mathcal E}$ is $\ll \frac{1}{(\log x)^{R^2 + 1}}.$ When $n \notin {\mathcal E}$ the contribution is 
$$\ll \left(\sum_{j \leq x} \frac{d_R^2(j)}{j} \right) \frac{(\log x)^{2R^2}}{Y} 
\ll \frac{(\log x)^{3R^2}}{Y} \ll \frac{1}{(\log x)^{R^2 + 1}}.$$
The second sum in (\ref{truncError}) is $$\ll \frac{1}{Y}  \sum_j \frac{d_R^2(j)}{j^{1+a}} \ll \frac{1}{Y} \zeta^{R^2}(1+a) \ll \frac{1}{(\log x)^{R^2 + 1}}. $$
Therefore the error from truncating the integral to $a - iY$ and $a + iY$ is $\ll \frac{1}{(\log x)^{R^2 + 1}}.$ 
\\
\\
Next let $b = - \frac{c}{\log Y},$ where $c$ is a small positive constant ($c$ is chosen to be smaller than zero-free region constant). Furthermore let $\gamma$ be the rectangle with vertices $a - iY, a + iY, b + iY, b -iY.$ Inside the rectangle $\gamma$, $H(s)\frac{x^s}{s}$ has a pole of order $k_1^2 +...+ k_m^2 + 1$ at $s = 0$ and poles at $s = i(\alpha_i \pm \alpha_j).$ Note that since we assume $|\alpha_i - \alpha_j| = O(1),$ $|\alpha_i \pm \alpha_j| \leq Y.$  Hence Cauchy's theorem gives 
\begin{eqnarray*}
\int_{a-iY}^{a+iY} + \int_{{\rm the \,\, other \,\, three \,\, sides \,\, of} \,\, \gamma}H(s)\frac{x^s}{s} \> ds  &=& {\rm res}_{s=0} H(s)\frac{x^s}{s} + \sum_{i \neq j} {\rm res}_{s = i(\alpha_i - \alpha_j)} H(s)\frac{x^s}{s}.
\end{eqnarray*}
By theorem 6.7 in \cite{MV} we obtain that on the other three sides of $\gamma,$ $\zeta^{k_1^2 + ... + k_m^2}(s + 1) \ll (\log x)^{k_1^2 + ... + k_m^2},$ and $\zeta^{k_ik_j}(s + 1 \pm i(\alpha_i - \alpha_j)) \ll (\log x)^{k_ik_j}.$ Therefore $H(s) \ll (\log x)^{R^2},$ and the contribution of the integral over the other three sides of $\gamma$ is $\ll \frac{1}{(\log x)^{R^2 + 1}}.$ 
\\
\\
Finally we need to show that 
\begin{equation} \label{eqn:resofhs}
{\rm res}_{s=0} H(s)\frac{x^s}{s} + \sum_{i \neq j} {\rm res}_{s = i(\alpha_i - \alpha_j)}H(s)\frac{x^s}{s} \sim_{{\bf k}, \beta} (\log x)^{k_1^2 + ... + k_m^2}\prod_{i < j}\big({\rm min}\{\frac{1}{|\alpha_i - \alpha_j|}, \log x\}\big)^{2k_ik_j}.
\end{equation}
Let $W := \{(i,j) \,\, |\,\, \lim_{T \rightarrow \infty} |\alpha_i - \alpha_j|\log T \,\,< \,\, \infty \,\,\, {\rm and} \,\,\, i \neq j\},$ and \\
$\widetilde{W} := \{(i,j) \,\, |\,\, \lim_{T \rightarrow \infty} |\alpha_i - \alpha_j|\log T \,\,= \,\, \infty \,\,\, {\rm and} \,\,\, i \neq j \}.$  We claim that for any $(i,j) \in \widetilde{W},$
\begin{equation} \label{eqn:negres}
  {\rm res}_{s = i(\alpha_i - \alpha_j)}H(s)\frac{x^s}{s} = o((\log x)^{k_1^2 + ... + k_m^2}\prod_{i < j}\big({\rm min}\{\frac{1}{|\alpha_i - \alpha_j|}, \log x\}\big)^{2k_ik_j}).
\end{equation}
We will provide technical details for the proof of (\ref{eqn:negres}) in Appendix 5.2 

Now to prove (\ref{eqn:resofhs}), we need to show that 
\begin{eqnarray} \label{eqn:mainres}
&& {\rm res}_{s=0} H(s)\frac{x^s}{s} + \sum_{(i,j) \in W} {\rm res}_{s = i(\alpha_i - \alpha_j)}H(s)\frac{x^s}{s} \\ \nonumber
&\sim_{{\bf k}, \beta}& (\log x)^{k_1^2 + ... + k_m^2}\prod_{i < j}\big({\rm min}\{\frac{1}{|\alpha_i - \alpha_j|}, \log x\}\big)^{2k_ik_j} \\ \nonumber
&=& (\log x)^{k_1^2 + ... + k_m^2 + \sum_{(i,j) \in W} k_ik_j} \prod_{(i,j) \in \widetilde{W}} \frac{1}{|\alpha_i - \alpha_j|^{k_ik_j}}. \nonumber
\end{eqnarray}
By Cauchy's theorem, 
$$ 
{\rm res}_{s=0} H(s)\frac{x^s}{s} + \sum_{(i,j) \in W} {\rm res}_{s = i(\alpha_i - \alpha_j)}H(s)\frac{x^s}{s} = \int_{C'} H(s)\frac{x^s}{s} \> ds,
$$
where we integrate over a circle $C'$ centered at 0 with radius $c/\log T,$ where for sufficiently large $T$, $c > \beta + 1$ ($\beta$ is defined in (\ref{eqn:beta})) for any $(i,j) \in W.$
For s on $C'$ and large $T$, $|s| < 1$, and $|s \pm i(\alpha_i - \alpha_j)| < 1$ for any $(i,j) \in W.$ By Corollary 1.6 and 1.7 in \cite{MV}, we have
\begin{eqnarray*}
&& H(s)\frac{x^s}{s} \\
&=& \left( \frac{1}{s} + \sum_{n = 0}^{\infty} a_n s^n \right)^{k_1^2 + ... + k_m^2} \prod_{(i,j) \in W} \left(\frac{1}{s -  i(\alpha_i - \alpha_j)} + \sum_{n = 0}^{\infty} a_n (s - i(\alpha_i - \alpha_j))^n \right)^{k_ik_j} \\
&& \cdot \prod_{(i,j) \in \widetilde{W}} \zeta^{k_ik_j}(s + 1 - i(\alpha_i - \alpha_j))\frac{G(s)x^s}{s}.
\end{eqnarray*}
We claim that as $x \rightarrow \infty,$ the main contribution of $\int_{C'} H(s)\frac{x^s}{s} \> ds$ is 
\begin{equation} \label{maincont}
\int_{C'} \frac{1}{s^{k_1^2 + ... + k_m^2}} \prod_{\substack{(i,j) \in W \\ i < j}}\frac{1}{(s^2 +  (\alpha_i - \alpha_j)^2)^{k_ik_j}}\prod_{(i,j) \in \widetilde{W}} \zeta^{k_ik_j}(s + 1 - i(\alpha_i - \alpha_j))\frac{G(s)x^s}{s} \> ds 
\end{equation}
By Cauchy's theorem, we obtain that for $n \geq 1$ and $b_{ij} \geq 0,$
\begin{eqnarray*}
&& \int_{C'} s^{n} \prod_{(i,j) \in W}(s -  i(\alpha_i - \alpha_j))^{b_{ij}}  \prod_{(i,j) \in \widetilde{W}} \zeta^{k_ik_j}(s + 1 - i(\alpha_i - \alpha_j)) \frac{G(s)x^s}{s} \> ds  = 0.
\end{eqnarray*}
Hence to prove the claim above, it is sufficient to prove the following two inequalities. For $k_1^2 + .. k_m^2 > n \geq 0$,
 \begin{eqnarray*} \label{eqn:smallpowerofS}
&& \int_{C'} \frac{1}{s^n} \prod_{i \neq j}  \zeta^{k_ik_j}(s + 1 - i(\alpha_i - \alpha_j))\frac{G(s)x^s}{s} \> ds  \ll (\log x)^{n + \sum_{(i,j) \in W} k_ik_j} \prod_{(i,j) \in \widetilde{W}} \frac{1}{|\alpha_i - \alpha_j|^{k_ik_j}},
\end{eqnarray*}
and for $1 \leq b_{pq} < k_pk_q.$
\begin{eqnarray*} 
&& \int_{C'} \frac{1}{(s - i(\alpha_p - \alpha_q))^{b_{pq}}} \zeta^{k_1^2 + ... + k_m^2}(s+ 1) \prod_{(i, j) \neq (p,q)} \zeta^{k_ik_j}(s + 1 - i(\alpha_i - \alpha_j)) \frac{G(s)x^s}{s} \> ds \\ \nonumber
&\ll& (\log x)^{k_1^2 + ... + k_m^2 + b_{pq} + \sum_{\substack{(i,j) \in W \\ (i,j)  \neq (p,q)}} k_ik_j} \prod_{(i,j) \in \widetilde{W}} \frac{1}{|\alpha_i - \alpha_j|^{k_ik_j}}, \nonumber
\end{eqnarray*}
The proof of both inequalities above follows easily from the fact that for  $s$ on $C'$,
$$|\zeta(s + 1)| \ll \log x,$$
and 
$$|\zeta(s + 1 - i(\alpha_i - \alpha_j))| \ll
\left\{ \begin{array}{ll}
                    \log x & {\rm if} \,\,\, (i,j) \in W, \\
                     \frac{1}{|\alpha_i - \alpha_j|} & {\rm if} \,\,\, (i,j) \in \widetilde{W}.
                 \end{array} \right.
$$
Now we compute the contribution of (\ref{maincont}) which also equals
\begin{equation} \label{intoverC}
\int_{C'} \frac{1}{s^{V+ 1}} \left(1 + \sum_{n = 1}^{\infty} \frac{b_n}{s^{2n}}\right) \widetilde{G}(s) x^s \> ds, 
\end{equation}
where $V := k_1^2 + ... + k_m^2 + \sum_{\substack{(i,j) \in W \\ i < j}} 2k_ik_j,$ 
$$
\prod_{\substack{(i,j) \in W \\ i < j}}  \left( 1 + \sum_{n = 1}^{\infty} (-1)^n{k_ik_j + n \choose k_ik_j} \frac{(\alpha_i - \alpha_j)^{2n}}{s^{2n}}\right) = 1 + \sum_{n = 1}^{\infty} \frac{b_n}{s^{2n}} = \sum_{n = 0}^{\infty} \frac{b_n}{s^{2n}} ,
$$
and 
$$ \widetilde{G}(s) = \prod_{(i,j) \in \widetilde{W}} \zeta^{k_ik_j}(s+ 1 + i(\alpha_i - \alpha_j)) G(s).$$ 
Notice that $\widetilde{G}(s)$ is analytic on and inside $C'$, and its radius of convergence is $\gg 1/\log x.$ Therefor for $s = O(1/\log x),$ we can write the Taylor series of $\widetilde{G}(s)$ as 
$ \widetilde{G}(s) = \sum_{n = 0}^{\infty} g_n s^n.$ 
For $n \geq 0,$
$$
\int_{C'} \frac{1}{s^{V + 1}}\frac{b_n}{s^{2n}} \widetilde{G}(s) x^s \> ds 
= g_0 b_n \frac{(\log x)^{V + 2n}}{(V + 2n)!} + \sum_{l = 1}^{V + 2n } g_l b_n \frac{(\log x)^{V + 2n - l}}{(V + 2n - l)!}.
$$
Hence (\ref{intoverC}) equals
\begin{eqnarray} \label{eqn:allresidue}
&& g_0 (\log x)^V \sum_{n = 0}^{\infty} \frac{b_n (\log x)^{2n}}{(V + 2n)!} + \sum_{k = 1}^{V} g_k (\log x)^{V - k} \sum_{n = 0}^{\infty} \frac{b_n (\log x)^{2n}}{(V + 2n - k)!} \\ \nonumber
&+& \sum_{l = 1}^{\infty} g_{V + 2l} \sum_{n = 0}^{\infty} \frac{b_{n + l} (\log x)^{2n}}{(2n)!} + \sum_{l = 1}^{\infty} g_{V + 2l - 1} \sum_{n = 0}^{\infty} \frac{b_{n + l } (\log x)^{2n + 1}}{(2n+ 1)!}. \nonumber
\end{eqnarray}
We claim that only the first term of the equation above gives the main contribution. To show that, we first need to find an upper bound for $b_n$ and $g_k.$ Let $M := \max_{(i,j) \in W} \{k_ik_j\},$ $2w :=$ the size of W, and $\alpha = \max_{(i,j) \in W} |\alpha_i - \alpha_j|.$ We have
$$b_n = (-1)^n \sum_{\substack{d_{ij} : \sum_{W} d_{ij} = n \\ d_{ij} \geq 0} } \,\,\, \prod_{\substack{(i,j) \in W \\ i < j}}  {k_ik_j + d_{ij} \choose k_ik_j}(\alpha_i - \alpha_j)^{2d_{ij}}. $$
Since there are ${w + n -1 \choose w-1 }$ terms such that $\sum_{\substack{(i,j) \in W \\ i < j}}  d_{ij} = n,$ where $d_{ij} \geq 0,$ we obtain that for large $T $ and $n \geq 1,$ 
\begin{equation} \label{eqn:bn}
|b_n| \leq  {w + n - 1 \choose w - 1}{M + n \choose M}^w \alpha^{2n} \leq c_0 n^{p} \alpha^{2n},
\end{equation}
where $c_0, p$ depends on $w, M.$ 

Next, let $r$ be the radius of convergence of $\widetilde{G}(s)$. Note $1/\log x = o(r)$. Hence $\lim_{n \rightarrow \infty} \frac{g_{n + 1}}{g_n} = \frac{1}{r} = o(\log x),$ and 
\begin{equation} \label{eqn:gn}
g_{n} \leq g_0 c_1 \left( \frac{2}{r} \right)^n \ll \prod_{(i,j) \in \widetilde{W}} \frac{1}{|\alpha_i - \alpha_j|^{k_ik_j}} \left( \frac{2}{r} \right)^n = o\left((\log x)^n \prod_{(i,j) \in \widetilde{W}} \frac{1}{|\alpha_i - \alpha_j|^{k_ik_j}}\right),
\end{equation}  
where $c_1 \in {\mathbb R}$ depends on $\widetilde{G}.$

By (\ref{eqn:bn}), (\ref{eqn:gn}) and the fact that $b_0 = 1$, the second sum of (\ref{eqn:allresidue}) is bounded above by
\begin{eqnarray*}
&\ll& \sum_{k = 1}^{V} |g_0| \big(\tfrac{2}{r}\big)^{k}(\log x)^{V-k} \left( \frac{1}{(V - k)!} \sum_{n = 1}^{\infty} \frac{ n^p (\alpha \log x)^{2n}}{(V + 2n - k)!} \right) \\
&\leq& \sum_{k = 1}^{V} |g_0| \big(\tfrac{2}{r}\big)^{k}(\log x)^{V-k}\left( \frac{1}{(V - k)!} + \sum_{n = 0}^{\infty} \frac{ n^p (\alpha \log x)^{2n}}{(2n + 1)!} \right). \\
&=& o\left((\log x)^V \prod_{(i,j) \in \widetilde{W}} \frac{1}{|\alpha_i - \alpha_j|^{k_ik_j}}\right),
\end{eqnarray*}
The sum over $n$ inside is $O(1)$ because $|\alpha \log x| \leq \beta$ as  $x \rightarrow \infty$.  
 
Now we consider the third sum of (\ref{eqn:allresidue}). From (\ref{eqn:bn}), we obtain that for any $l \geq 1,$
\begin{equation} \label{eqn:innerforthirdsum}
\sum_{n = 0}^{\infty} b_{n + l} \frac{(\log x)^{2n}}{(2n)!} \leq l^p \alpha^{2l}\sum_{n = 0}^{\infty} \frac{(n + l)^p}{l^p} \frac{(\alpha \log x)^{2n}}{(2n)!} \leq l^p \alpha^{2l}\sum_{n = 0}^{\infty}  \frac{(2\alpha \log x)^{2n}}{(2n)!}. 
\end{equation}
For the last inequality, we use the fact that for all $n \geq 0$ and $l \geq 1,$ $\frac{n + l}{l} \leq n + 1 < 4^n.$ The sum over $n$ is also $O(1)$. By (\ref{eqn:gn}), (\ref{eqn:innerforthirdsum}) and the fact that $\alpha/r = o(1)$, the third sum is 
$$\ll \frac{1}{r^V} \left(\sum_{l = 1}^{\infty} \left( \frac{2\alpha}{r}\right)^{2l} l^p \right)\prod_{(i,j) \in \widetilde{W}} \frac{1}{|\alpha_i - \alpha_j|^{k_ik_j}} = o((\log x)^V \prod_{(i,j) \in \widetilde{W}} \frac{1}{|\alpha_i - \alpha_j|^{k_ik_j}}).$$
Similarly we can conclude that the fourth sum is also $o((\log x)^V \prod_{(i,j) \in \widetilde{W}} \frac{1}{|\alpha_i - \alpha_j|^{k_ik_j}}).$

Finally we consider the first term of (\ref{eqn:allresidue}). By (\ref{eqn:bn}), the sum of the term is 
$$ 1 \ll \frac{1}{V!} + \sum_{n = 1}^{\infty} \frac{b_n \log^{2n}x}{(V + 2n)!} \ll \frac{1}{V!} + \sum_{n = 1}^{\infty} \frac{n^p (\alpha \log x)^{2n}}{(V + 2n)!} = O(1), $$
where the implied constant depends on $\beta$ and ${\bf k}$ since the sum over $n$ depends on $|\alpha\log x| \leq \beta$ for large $x$. Therefore by (\ref{maincont}) - (\ref{eqn:allresidue}), we obtain that
\begin{eqnarray*}
\int_{C'} H(s)\frac{x^s}{s} \> ds &\sim_{{\bf k}, \beta}& (\log x)^V \prod_{(i,j) \in \widetilde{W}} \frac{1}{|\alpha_i - \alpha_j|^{k_ik_j}} \\
&=&  (\log x)^{k_1^2 + ... + k_m^2}\prod_{i < j}\big({\rm min}\{\frac{1}{|\alpha_i - \alpha_j|}, \log x\}\big)^{2k_ik_j}
\end{eqnarray*}
This concludes the proof of (\ref{eqn:mainres}) and Lemma \ref{lem:at}.
\end{proof}

Let $S_1$ be defined as in (\ref{eqn:S1}). Also we define 
$$a_j = \sum_{n_1n_2...n_m = j}d_{k_1}(n_1)d_{k_2}(n_2)...d_{k_m}(n_m)n_1^{-i\alpha_1}n_2^{-i\alpha_2}...n_m^{-i\alpha_m},$$
and 
$$L(s, t) := \zeta\big(\h + it + i\alpha_1 + s \big)^{k_1}\zeta\big(\h + it + i\alpha_2 + s \big)^{k_2}...\zeta\big(\h + it + + s i\alpha_m\big)^{k_m} = \sum_{n=1}^{\infty} \frac{a_n}{n^{1/2 + it + s}}.$$
To calculate the lower bound for $S_1$ we need the following approximation for $L(0,t)$.
\begin{lemma} \label{lem:approxL0} Let $w = 6(k_1 + ... + k_m).$ For large $T$, and $T \leq t \leq 2T,$ 
$$ L(0,t) = \sum_{n \leq T^{2w}} \frac{a_n}{n^{1/2 + it}}e^{-n/T^w} + O\left( \frac{1}{T} \right),$$
where the implied constant depends on ${\bf k} = (k_1,...,k_m).$
\end{lemma}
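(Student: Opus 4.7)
The plan is to realize the smooth weight $e^{-n/T^w}$ via the Mellin identity $e^{-y} = \frac{1}{2\pi i}\int_{(c)} \Gamma(s)\, y^{-s}\, ds$, applied with $y = n/T^w$ and any $c > 0$, and then to shift the contour past the pole of $\Gamma$ at $s = 0$. Multiplying by $a_n n^{-1/2 - it}$ and summing over $n$, absolute convergence of the Dirichlet series $L(s,t) = \sum_n a_n n^{-1/2 - it - s}$ on $\mathrm{Re}(s) = 2$ (which holds because $|a_n| \leq d_R(n) \ll n^\epsilon$, where $R := k_1 + \cdots + k_m$) lets me exchange sum and integral to obtain
$$\sum_{n=1}^\infty \frac{a_n}{n^{1/2+it}}\, e^{-n/T^w} = \frac{1}{2\pi i}\int_{(2)} \Gamma(s)\, L(s,t)\, T^{ws}\, ds.$$

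Next I shift the contour to $\mathrm{Re}(s) = -1/2 + \epsilon$ for some small $\epsilon > 0$. Inside the strip the integrand is meromorphic with a simple pole at $s = 0$ from $\Gamma(s)$, whose residue is exactly $L(0,t)$ — the desired main term — and with poles of order $k_j$ at $s = \tfrac{1}{2} - i(t + \alpha_j)$ from the zeta factors of $L(s,t)$. At each of the latter, $\Gamma$ is evaluated at $\tfrac{1}{2} - i(t + \alpha_j)$ with $|t| \geq T$, so Stirling's formula produces exponential decay of size $e^{-\pi T/2}$, which overwhelms the polynomial growth $T^{w/2}$ coming from $T^{ws}$ together with derivative factors; the aggregate contribution of these residues is $\ll e^{-\pi T/4}$, hence negligible.

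For the shifted integral I combine the bounds $|T^{ws}| = T^{-w/2 + w\epsilon}$, $|\Gamma(-1/2 + \epsilon + iy)| \ll e^{-\pi|y|/2}$, and the unconditional convexity estimate $|\zeta(\epsilon + i\tau)| \ll (|\tau|+1)^{(1-\epsilon)/2 + o(1)}$, which yields $|L(-1/2 + \epsilon + iy, t)| \ll (|y + t| + 1)^{R(1-\epsilon)/2 + o(1)}$. Because $e^{-\pi|y|/2}$ effectively restricts $|y|$ to be $O(\log T)$, the polynomial factor is at most $T^{R/2 + o(1)}$, and the whole integral is $\ll T^{-w/2 + R/2 + o(1)}$. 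With $w = 6R$ this is $\ll T^{-5R/2 + o(1)} \ll T^{-1}$ since $R \geq 1$. Finally, truncating the smoothed sum at $n = T^{2w}$ introduces an error of at most $\sum_{n > T^{2w}} n^{\epsilon - 1/2}\, e^{-n/T^w} \ll \exp(-\tfrac{1}{2}T^w)$, well below $1/T$. Combining these four pieces gives the lemma.

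The main obstacle is calibrating $w$ so that the unconditional polynomial growth of $\zeta$ on $\mathrm{Re}(s+1/2) = \epsilon$ is beaten by the saving $T^{-w/2}$ from $T^{ws}$; the choice $w = 6R$ is made precisely so that the balance yields $O(T^{-1})$ with room to spare. A secondary point is that the crossed poles at $s = \tfrac{1}{2} - i(t+\alpha_j)$ must be controlled via exponential decay of $\Gamma$ off the real axis, which is comfortable here only because $|t| \geq T$ is genuinely large.
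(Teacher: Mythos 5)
Your overall strategy is the same as the paper's: write the smoothed sum as $\frac{1}{2\pi i}\int_{(c)} L(s,t)\Gamma(s)T^{ws}\,ds$, shift the contour left to pick up $L(0,t)$ at $s=0$ and the residues at the poles of $L(s,t)$ at $s = \tfrac{1}{2}-i(t+\alpha_j)$, bound the shifted integral by exponential decay of $\Gamma$ together with a convexity estimate for $\zeta$, and truncate the $n$-sum. The paper shifts to $\Re(s)=-1/3$ using a bound from Tenenbaum, you shift to $\Re(s)=-1/2+\epsilon$ using convexity at $\Re = \epsilon$; both choices deliver $O(1/T)$ with $w=6R$, so that difference is cosmetic.

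The one genuine gap is the treatment of the crossed poles. You claim the residues at $s_j := \tfrac{1}{2}-i(t+\alpha_j)$ are $\ll e^{-\pi T/4}$ because Stirling decay of $\Gamma$ at height $T$ beats the polynomial growth of $T^{ws}$ and of the ``derivative factors.'' But the pole at $s_j$ has order $k_j$, so the residue formula brings in derivatives of the \emph{other} zeta factors, producing quantities like $\zeta^{(\ell)}\bigl(1+i(\alpha_i-\alpha_j)\bigr) \asymp |\alpha_i-\alpha_j|^{-(\ell+1)}$. The hypotheses in force (only $\alpha_i\neq\alpha_j$, $|\alpha_i-\alpha_j|=O(1)$, and $\lim(\alpha_i-\alpha_j)\log T$ exists or is $\pm\infty$) permit $|\alpha_i-\alpha_j|$ to be smaller than, say, $e^{-T}$, in which case these factors are \emph{not} polynomial in $T$ and an individual residue need not be dominated by $e^{-\pi T/2}$. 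The total over the cluster is of course still tiny --- the blow-ups cancel --- but that cancellation must be exhibited. The paper does this by partitioning $\{\alpha_1,\dots,\alpha_m\}$ into groups $J_q$ with pairwise spacing $O(1/\log T)$ and replacing the sum of residues over a group by a single contour integral around a circle of radius $\asymp 1/\log T$ enclosing all the poles of that group; on such a circle every $\zeta$-factor is $\ll\log T$, $\Gamma$ contributes $e^{-\pi T/2}$, and the integral is $O(1/T)$. You should replace your residue-by-residue estimate by this grouped contour-integral argument to make the bound on the crossed-pole contribution rigorous.
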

\begin{proof}
For $c > \h,$ by integrating term by term, we obtain
$$\frac{1}{2\pi i} \int_{(c)} L(s,t) \Gamma(s) x^s \> ds = \sum_{n} \frac{a_n}{n^{1/2 + it}}e^{-n/x}.$$
For the integral on the left hand side, we shift to the vertical line (-1/3). In doing so, we pick up the residues of the integrand at the poles of $L(s, t),$ whose the real part is 1/2, and at $s = 0.$ Therefore, 
\begin{equation} \label{eqn:functionaleqningamma}
\sum_{n} \frac{a_n}{n^{1/2 + it}}e^{-n/x} = L(0,t) + \sum_{j=1}^m {\rm res}_{s = \h - it - i\alpha_i} L(s, t)\Gamma(s) x^s + I,
\end{equation}
where 
$$ I = \int_{(-1/3)}L(s,t) \Gamma(s) x^s \> ds. $$
By Theorem 7 on p 146 in \cite{Te}, 
$$\left|\zeta\left(\tfrac{1}{6} + i(t + \alpha_j + y)\right)\right| \ll |t + \alpha_j + y|^{5/6} \ll |T + y|^{5/6}, $$
since $\alpha_j \ll \log \log T.$ It follows that $L(-1/3 + iy, t) \ll |T + y|^{5(k_1 + ...+ k_m)/6}.$ Also for large $y$, 
$$|\Gamma(-1/3 + iy) \ll |y|^{-5/6}e^{-\pi y/2}.$$
Therefore 
\begin{equation} \label{eqn:I}
I \ll_{{\bf k}} T^{(k_1 + ... + k_m)/2}\left(\frac{T^{k_1 + ... + k_m}}{x} \right)^{1/3} \ll \frac{1}{T^{7(k_1+..+k_m)/6}},
\end{equation}
upon choosing $x = T^w$. 
Furthermore,
\begin{equation} \label{eqn:truncatesum}
\sum_{n} \frac{a_n}{n^{1/2 + it}}e^{-n/x} = \sum_{n \leq T^{2w}} \frac{a_n}{n^{1/2 + it}}e^{-n/T^w} + O\big(\frac{1}{T}\big). 
\end{equation}
Finally we need to show that 
\begin{equation}\label{eqn:resofhalf}
\sum_{j=1}^m {\rm res}_{s = \h - it - i\alpha_i} L(s, t)\Gamma(s) x^s = O\left( \frac{1}{T}\right). 
\end{equation}
Let $J_1, J_2,..., J_n$ be a partition of $\{\alpha_1, \alpha_2, ..., \alpha_m\}$ such that for any $q,$ and any $\alpha_i, \alpha_j \in J_q$, $|\alpha_i - \alpha_j| = O(1/\log T).$ Also we have for each $k$, 
$$ \sum_{\alpha_i \in J_q} {\rm res}_{s = \h - it - i\alpha_i} L(s, t)\Gamma(s) x^s = \oint L(s, t)\Gamma(s) x^s \> ds,$$
where we integrate over a circle centered at $\h - it - i\alpha_i$ with radius $c/\log T,$ where $c/\log T > |\alpha_i - \alpha_j|$ for any $\alpha_j \in J_q$. Note that we can choose $\alpha_i$ to be any element in $J_q.$ By Stirling's formula for the Gamma function and since $\zeta(1/2 + it + i\alpha_j + s) \ll \log T$ for any $\alpha_j$ and $s$ on the circle, we obtain that
$$ \oint L(s, t)\Gamma(s) x^s \> ds = O\big( \frac{1}{T} \big),$$ 
and equation (\ref{eqn:resofhalf}) is proved. The lemma follows from (\ref{eqn:functionaleqningamma}) - (\ref{eqn:resofhalf}). 
\end{proof}
Now we are ready to find the lower bound for $S_1.$
\begin{lemma} \label{lem:S1} Let $K(y)$ be a nonnegative bounded function in ${\bf C}^{\infty}(\R)$ and compactly supported in [1,2], and let $x = \sqrt{T}.$ Unconditionally, for large $T$, we have
$$ S_1 \gg_{{\bf k}, \beta} T (\log T)^{k_1^2 + ... + k_m^2}\prod_{i < j}\big({\rm min}\{\frac{1}{|\alpha_i - \alpha_j|}, \log T\}\big)^{2k_ik_j}.$$
\end{lemma}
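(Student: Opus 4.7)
The plan is to apply Lemma~\ref{lem:approxL0} to replace $L(0,t)$ inside $S_1$ by the short Dirichlet polynomial $\sum_{n \le T^{2w}} a_n n^{-1/2-it}e^{-n/T^w}$, expand $\overline{A(t)} = \sum_{j \le x} \overline{a_j} j^{-1/2+it}$, interchange the sum with the $t$-integration, and show that only the diagonal $n=j$ contributes to the main order. The diagonal sum is exactly $\sum_{n \le x}|a_n|^2/n$, which Lemma~\ref{lem:at} has already identified as being of the claimed size.

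First, the $O(1/T)$ error in Lemma~\ref{lem:approxL0}, integrated against $\overline{A(t)}K(t/T)$, contributes at most $O(T^{-1})\int_{T}^{2T}|A(t)|\,dt$; by Cauchy--Schwarz and Lemma~\ref{lem:at} this is $O(T^{1/2}(\log T)^{O(1)})$, much smaller than the target. After interchanging summation and integration, $S_1$ equals
\[
\bigg|\sum_{n \le T^{2w}}\sum_{j \le x} \frac{a_n\overline{a_j}}{\sqrt{nj}}\,e^{-n/T^w}\cdot T\,\hat{K}\!\bigl(\tfrac{T}{2\pi}\log\tfrac{j}{n}\bigr)\bigg|
\]
plus this negligible error, where $\hat{K}(y)=\int K(u)e^{-2\pi i y u}\,du$ is Schwartz because $K \in C_c^\infty([1,2])$.

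On the diagonal $n=j$ we are forced into $n \le x = \sqrt{T}$, and $e^{-n/T^w}=1+O(T^{1/2-w})=1+o(1)$ since $w = 6(k_1+\cdots+k_m) \ge 6$. The diagonal contribution is
\[
T\,\hat{K}(0)\sum_{n \le \sqrt{T}}\frac{|a_n|^2}{n}\bigl(1+o(1)\bigr),
\]
and this sum is precisely $\sum_{j \le x} D(j)/j$ from the proof of Lemma~\ref{lem:at}, whose value is $\sim_{{\bf k},\beta}(\log T)^{k_1^2+\cdots+k_m^2}\prod_{i<j}\min(1/|\alpha_i-\alpha_j|,\log T)^{2k_ik_j}$. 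Because $K\ge 0$ is nontrivial, $\hat{K}(0)=\int K>0$ and this supplies the advertised lower bound with a positive constant.

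For the off-diagonal $n \ne j$, use $|\log(j/n)|\ge 1/(2\max(n,j))$. When $\max(n,j)\le T^{1-\varepsilon}$ one has $T|\log(j/n)|\ge T^\varepsilon/2$; when $n>T^{1-\varepsilon}$ (which is forced because $j\le\sqrt{T}$) then $n \gg j$ and $T|\log(j/n)|\gg T\log T$. Either way, the super-polynomial decay of $\hat{K}$, combined with the trivial estimate $|a_n|\le d_R(n)\ll n^\varepsilon$ (where $R = k_1+\cdots+k_m$) and the factor $e^{-n/T^w}$ restricting the effective range of $n$ to $n \ll T^w\log T$, reduces the total off-diagonal contribution to $O(T^{-A})$ for any $A$. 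The main delicate point is verifying that the diagonal main term is uniform in the shifts $\alpha_i$; but $\hat{K}(0)$ does not see $\alpha$ and the asymptotic for $\sum D(j)/j$ has already been established in Lemma~\ref{lem:at} with the required uniformity in ${\bf k}$ and $\beta$, so no further obstacle remains beyond invoking it.
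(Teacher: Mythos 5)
Your proposal follows the same route as the paper's proof: apply Lemma~\ref{lem:approxL0} to pass to a short Dirichlet polynomial, expand $\overline{A(t)}$, interchange, isolate the diagonal as $\hat{K}(0)\sum_{n\le x}|a_n|^2/n$ (handled by Lemma~\ref{lem:at}), kill the off-diagonal using the rapid decay of $\hat{K}$ together with $|a_n|\ll n^{\epsilon}$, and dispose of the $O(1/T)$ error by Cauchy--Schwarz and Lemma~\ref{lem:at}. The only differences are cosmetic (you write $e^{-n/T^w}=1+o(1)$ where the paper uses $e^{-n/T^w}\ge 1/e$, and you track the off-diagonal through $|\log(j/n)|\ge 1/(2\max(n,j))$ where the paper states $T\log(m/n)\gg T^{1/4}$ directly); the substance is identical.
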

\begin{proof} From Lemma \ref{lem:approxL0},
\begin{eqnarray*}
S_1 &=& \int_{-\infty}^{\infty} \left( \sum_{m \leq T^{2w}} \frac{a_m}{m^{1/2 + it}}e^{-n/T^w} + O\left( \frac{1}{T} \right) \right) \overline{A(t)} K\left( \frac{t}{T} \right) \> dt \\
&=& T\sum_{\substack{m \leq T^{2w} \\ n \leq x}} \frac{a_m\overline{a_n}}{(mn)^{1/2}}e^{-m/T^w} \hat{K}\left(T\log\left(\frac{m}{n} \right)\right) + O\left(\frac{1}{T} \int_{-\infty}^{\infty} |A(t)|K\left(\frac{t}{T} \right) \> dt \right).
\end{eqnarray*}
By Cauchy-Schwarz and the boundedness of $K,$
\begin{eqnarray*}
\int_{-\infty}^{\infty} |A(t)|K\left(\frac{t}{T} \right) \> dt &\ll& 
\int_{T}^{2T} |A(t)| \> dt \\
&\ll& T^{\h} \left( \int_{T}^{2T} |A(t)|^2 \> dt \right)^{\h} \\
&\ll_{{\bf k}, \beta}& T\left((\log T)^{(k_1^2 + ... + k_m^2)/2}\prod_{i < j}\big({\rm min}\{\frac{1}{|\alpha_i - \alpha_j|}, \log T\}\big)^{k_ik_j} \right),
\end{eqnarray*}
where the last inequality is obtained from Lemma \ref{lem:at}. Hence the error term is bounded by $T^{\epsilon}$ for any small $ \epsilon >  0.$ Now we consider the sum. 
Since $ e^{-n/T^w} \leq 1,$ the sum equals
$$ \sum_{n \leq x} \frac{|a_n|^2}{n} e^{-n/T^w}\hat{K}(0) + O\left(\sum_{\substack{m \neq n \\ n \leq x \\ m \leq T^{2w}}} \frac{|a_m||a_n|}{(mn)^{1/2}}\hat{K}\left(T\log\left(\frac{m}{n} \right)\right)   \right). $$
Since $n \leq x = \sqrt{T},$ $T\log\left(\frac{m}{n} \right) \gg T^{1/4}.$ Also for positive integer $r$, $\hat{K}(\xi) \ll_{r} \frac{1}{\xi^r},$ and $|a_m| \ll_{\epsilon} m^{\epsilon}.$ Therefore,
$$\sum_{\substack{m \neq n \\ n \leq x \\ m \leq T^{2w}}} \frac{|a_m||a_n|}{(mn)^{1/2}}\hat{K}\left(T\log\left(\frac{m}{n} \right)\right) \ll_r \frac{1}{T^r}.$$ 
For the main term, since $1/e \leq e^{-n/T^w}  $ for $n \leq x$ we have 
$$\sum_{n \leq x} \frac{|a_n|^2}{n} e^{-n/T^w}\hat{K}(0) \gg \sum_{n \leq x} \frac{|a_n|^2}{n}  \sim_{{\bf k}, \beta}  (\log T)^{k_1^2 + ... + k_m^2}\prod_{i < j}\big({\rm min}\{\frac{1}{|\alpha_i - \alpha_j|}, \log T\}\big)^{2k_ik_j},$$
by Lemma \ref{lem:at}. This proves the lemma.
 \end{proof}
\section{Appendix}
\subsection{} Here we will show that $C_k$ in (\ref{eqn:koster_moment}) exists. Let 
$$f(z) = 1 - \frac{(\pi z)^2 }{3!} + \frac{(\pi z)^4 }{5!} - ... = \sum_{n = 0}^{\infty} (-1)^n \frac{(\pi z)^{2n}}{(2n + 1)!}.$$ 
It is clear that $f(z) = f(-z)$ for all $z$, and $f(z)$ is an analytic continuation of $\frac{\sin \pi z}{\pi z}.$ For $j \neq l,$ 
$b_{jl} = f(\mu_j - \mu_l). $
Also notice that $1 = b_{jj} = \lim_{a \rightarrow 0} \frac{\sin a}{a} = f(\mu_j - \mu_j).$

The denominator of the right hand side of (\ref{eqn:koster_moment}) has the factor of the form $(\mu_j - \mu_l)^2$. Specifically, $\Delta^2(2\pi\mu_1,...,2\pi\mu_{2k}) = \prod_{j < l} (2\pi\mu_j - 2\pi\mu_l)^2.$ Hence to prove that the limit in (\ref{eqn:koster_moment}) exists, it is enough to show that the numerator also has a factor of $(\mu_j - \mu_l)^2$ as $\mu_j \rightarrow \mu_l,$ where $j,l$ are both elements in $\{1,..., 2k \}$ or both in $\{2k + 1,..., 4k \}.$  

We start by subtracting the first row of the matrix $(b_{jl})$ from the $j^{th}$ row, where $j = 2,..., 2k,$ and divide row $j$ by the factor of $\mu_j - \mu_1.$ Furthermore, we subtract the $(2k + 1)^{th}$ row from the $i^{th}$ row, where $i = 2k+ 2,..., 4k$ and divide row $i$ by the factor of $\mu_j - \mu_{2k + 1}.$ Define this new matrix as $(b_{1,jl}).$ Hence 
$$\det(b_{jl}) = \det(b_{1,jl}) \prod_{j = 2}^{2k}(\mu_j - \mu_1)(\mu_{2k + j} - \mu_{2k + 1}) .$$ 
Since $\lim_{\mu_a \rightarrow \mu_j} \frac{f(\mu_j - \mu_l) - f(\mu_a - \mu_l)}{\mu_j - \mu_a} = f'(\mu_j - \mu_l),$ we obtain that as $\mu_1 \rightarrow \mu_i$ (or $\mu_{2k + 1} \rightarrow \mu_{2k + i},$) $b_{1, il} \rightarrow f'(\mu_i - \mu_l),$ where $i = 2,..., 2k, 2k + 2,..., 4k.$ 

Next we subtract the second row of the matrix $(b_{1,jl})$ from the $j^{th}$ row, where $j = 3,..., 2k,$ and divide row $j$ by the factor of $\mu_j - \mu_2.$ Also we subtract the $(2k + 2)^{th}$ row from the $i^{th}$ row, where $i = 2k+ 3,..., 4k$ and divide row $i$ by the factor of $\mu_j - \mu_{2k + 2}.$ Define this new matrix as $(b_{2,jl}).$ Hence
$$ \det(b_{1,jl}) = \det(b_{2,jl}) \prod_{j = 3}^{2k}(\mu_j - \mu_2)(\mu_{2k + j} - \mu_{2k + 2}), $$
and as $\mu_2 \rightarrow \mu_i$ (or $\mu_{2k + 2} \rightarrow \mu_{2k + i},$) $b_{2,il} \rightarrow f''(\mu_i - \mu_l),$ where $i = 3,..., 2k, 2k + 3,..., 4k.$ 

Now we continue procedures as above. At step $m^{th},$ where $m = 2,..., 2k - 1$, we subtract the $m^{th}$ row of the matrix $(b_{m-1, jl})$ from the $j^{th}$ row, where $j = m + 1, ..., 2k,$ and divide row $j$ by the factor of $\mu_j - \mu_m.$ Also we subtract the $(2k + m)^{th}$ row from the $i^{th}$ row, where $i = 2k+ m + 1,..., 4k$ and divide row $i$ by the factor of $\mu_j - \mu_{2k + m}.$  Define this new matrix as $(b_{m,jl}).$ We conclude that 
$$ \det(b_{jl}) = \det(b_{2k - 1, jl}) \prod_{i < j} (\mu_j - \mu_i).$$
Also as $\mu_i \rightarrow \mu_j,$ where $1 \leq i < j \leq 2k,$ $b_{2k - 1, jl} \rightarrow f^{(j-1)}(\mu_{j} - \mu_l),$ and as $\mu_{2k + i} \rightarrow \mu_{2k + j},$ where $1 \leq i < j \leq 2k,$ $b_{2k - 1, (2k + j,l)} \rightarrow f^{(j-1)}(\mu_{2k + j} - \mu_l).$ 

Next we repeat the steps above on columns instead of rows. This eventually gives
\begin{equation} \label{continuation}
 \det(b_{jl}) = \det(c_{jl}) \prod_{i < j} (\mu_j - \mu_l)^2.
\end{equation}
As $\mu_i \rightarrow \mu_j,$ $\mu_{2k + i} \rightarrow \mu_{2k + j},$ $\mu_{j} \rightarrow 0,$ and $\mu_{2k + j} \rightarrow c/2\pi,$ where $1 \leq i < j \leq 2k,$ we have 
$$c_{jl} \rightarrow \left\{ \begin{array}{ll}
                   f^{j + l - 2}(0)  & {\rm if} \,\,\, 1 \leq j, l \leq 2k, \\
                   f^{j + l - 4k - 2 }(0) & {\rm if} \,\,\, 2k + 1 \leq j, l \leq 4k,    \\                 
f^{j + l - 2k - 2}\big(\tfrac{c}{2\pi}\big)  & {\rm otherwise.}
                 \end{array} \right.$$
From above and (\ref{continuation}), $C_k$ in (\ref{eqn:koster_moment}) exists.

\subsection{} In this section, we will prove (\ref{eqn:negres}). Recall that 
$$W := \{(i,j) \,\, |\,\, \lim_{T \rightarrow \infty} |\alpha_i - \alpha_j|\log T \,\,< \,\, \infty \,\,\, {\rm and} \,\,\, i \neq j\},$$ 
and 
$$\widetilde{W} := \{(i,j) \,\, |\,\, \lim_{T \rightarrow \infty} |\alpha_i - \alpha_j|\log T \,\,= \,\, \infty \,\,\, {\rm and} \,\,\, i \neq j \}.$$
Without loss of generality, we can assume that $(i,j)$ in (\ref{eqn:negres}) is $(1, 2).$ Let $V_0$ be subsets of $\widetilde{W}$ such that $(p,q) \in V_0$ if and only if $|(\alpha_p - \alpha_q) - (\alpha_1 - \alpha_2)| = O(1/\log T).$ It is sufficient to prove that the contribution of 
$$\sum_{(p,q) \in V_0} {\rm res}_{s = i(\alpha_p - \alpha_q)}H(s)\frac{x^s}{s} $$ 
is negligible. By Cauchy's theorem, we have the sum of the residues in (\ref{eqn:negres}) is equal to
$$ \oint \zeta^{k_1^2 + ... + k_m^2}(s+1) \prod_{i \neq j} \zeta^{k_ik_j}(s+1-i(\alpha_i - \alpha_j)) G(s)\frac{x^s}{s} \> ds, $$
where the integral is over a circle $\widetilde{C}$ centered at $i(\alpha_1 - \alpha_2)$ and with radius $c/\log x,$ where $c/\log x > |(\alpha_p - \alpha_q) - (\alpha_1 - \alpha_2)| + 1/\log x$ for $(p,q) \in V_0.$
From Corollary 1.7 in \cite{MV}, if $|s + i\alpha| \leq B,$ where $s + i\alpha \neq 0,$ and $B$ is a positive real number, then $\zeta(1 + s + i\alpha) =  \frac{1}{s + i\alpha} + O(1),$ where the implied constant depends on $B.$ 
Since $1/\log x = o(|\alpha_1 - \alpha_2|),$  for $s$ on the circle $\widetilde{C},$
\begin{equation} \label{eqn:zetasplus1}
\zeta(s+1) \ll \frac{1}{|\alpha_1 - \alpha_2|}.
\end{equation}
If $(i,j) \in W,$ then $|(\alpha_i - \alpha_j) - (\alpha_1 - \alpha_2)| \sim |\alpha_1 - \alpha_2|,$ and for $s$ on the circle $\widetilde{C}$,
\begin{equation} \label{eqn:coorinW}
\zeta(s + 1 - i(\alpha_i - \alpha_j))\ll \frac{1}{|\alpha_1 - \alpha_2|}.
\end{equation} 
If $(i,j)$ is in $V$ then for $s$ on the circle $\widetilde{C}$,
\begin{equation} \label{eqn:coorinwWinV}
\zeta(s + 1 - i(\alpha_i - \alpha_j)) \ll \log x.
\end{equation}
If $(i,j) \in \widetilde{W}$ but is not in $V_0$, we have three cases.
\begin{itemize}
\item Let $V_1 \subset \widetilde{W} \backslash V_0$ such that $(i,j) \in V_1 $ if $\lim_{T \rightarrow \infty} \frac{|\alpha_1 - \alpha_2|}{|\alpha_i - \alpha_j|} < \infty,$ and  $\lim_{T \rightarrow \infty} \frac{\alpha_1 - \alpha_2}{\alpha_i - \alpha_j} \neq 1.$ Then for $s$ on the circle $\widetilde{C}$,
\begin{equation} \label{eqn:contributionV1}
 \zeta(s + 1 - i(\alpha_i - \alpha_j)) \ll \frac{1}{|\alpha_i - \alpha_j|}.
\end{equation}
\item Let $V_2 \subset \widetilde{W} \backslash V_0$ such that $(i,j) \in V_2 $ if $\lim_{T \rightarrow \infty} \frac{|\alpha_1 - \alpha_2|}{|\alpha_i - \alpha_j|} = \infty$. Then for $s$ on the circle $\widetilde{C}$,
\begin{equation} \label{eqn:contributionV2}
\zeta(s + 1 - i(\alpha_i - \alpha_j)) \ll \frac{1}{|\alpha_1 - \alpha_2|}.
\end{equation}
\item Let $V_3 \subset \widetilde{W} \backslash V_0$ such that $(i,j) \in V_3 $ if $\lim_{T \rightarrow \infty} \frac{(\alpha_1 - \alpha_2)}{(\alpha_i - \alpha_j)} = 1$. Then for $s$ on the circle $\widetilde{C}$,
\begin{equation} \label{eqn:contributionV3}
\zeta(s + 1 - i(\alpha_i - \alpha_j)) \ll \frac{1}{|(\alpha_i - \alpha_j) - (\alpha_1 - \alpha_2)|}.
\end{equation}
\begin{rem}Since $(i,j) \in V_3,$ it is clear that $\frac{1}{|(\alpha_i - \alpha_j) - (\alpha_1 - \alpha_2)|} = o(\log x).$
\end{rem}
\end{itemize}
Note that all implied constants above depend on $\overrightarrow{\alpha}.$
From (\ref{eqn:zetasplus1}) - (\ref{eqn:contributionV3}), we obtain that
\begin{eqnarray*}
&& \oint \zeta^{k_1^2 + ... + k_m^2}(s+1) \prod_{i \neq j} \zeta^{k_ik_j}(s+1 -i(\alpha_i - \alpha_j)) G(s)\frac{x^s}{s} \> ds \\
&\ll& \frac{1}{|\alpha_1 - \alpha_2|^{k_1^2 + ... + k_m^2 + \sum_{(i,j) \in W} k_ik_j}} \cdot \prod_{(i,j) \in V_0} (\log x)^{k_ik_j} \cdot \prod_{(i,j) \in V_1 }\frac{1}{|\alpha_i - \alpha_j|^{k_ik_j}} \\
&& \cdot \prod_{(i,j) \in V_2} \frac{1}{|\alpha_1 - \alpha_2|^{k_ik_j}} \cdot\prod_{(i,j) \in V_3} |(\alpha_i - \alpha_j) - (\alpha_1 - \alpha_2)|^{k_ik_j}.
\end{eqnarray*}
\begin{rem} \label{rem:jiandijcannotbeboth}
If $(i,j) \in V_0,$ then $(\alpha_i - \alpha_j) = (\alpha_1 - \alpha_2) + O(1/\log T).$ Therefore $(j,i)$ cannot be in $V_0.$ Similarly, if $(i,j) \in V_3, (j,i) \notin V_3.$
\end{rem}
We know that $\lim_{x \rightarrow \infty} |\alpha_1 - \alpha_2|\log x = \infty.$ Therefore to prove that the right hand side of the above inequality is 
\begin{eqnarray*}
&& o\left((\log x)^{k_1^2 + ... + k_m^2}\prod_{i \neq j}\big({\rm min}\{\frac{1}{|\alpha_i - \alpha_j|}, \log x\}\big)^{k_ik_j}\right) \\
&=& o\left((\log x)^{k_1^2 + ... + k_m^2 + \sum_{(i,j) \in W} k_ik_j}\cdot \prod_{(i,j) \in V_0} \frac{1}{|\alpha_1 - \alpha_2|^{k_ik_j}} \cdot \prod_{(i,j) \in V_1 } \frac{1}{|\alpha_i - \alpha_j|^{k_ik_j}} \cdot  \right. \\
&& \left. \cdot \prod_{(i,j) \in V_2} \frac{1}{|\alpha_i - \alpha_j|^{k_ik_j}} \cdot \prod_{(i,j) \in V_3} \frac{1}{|\alpha_1 - \alpha_2|^{k_ik_j}}\right),
\end{eqnarray*}
it is enough to show that 
\begin{eqnarray} \label{eqn:mainsmall}
&& \left(\frac{1}{|\alpha_1 - \alpha_2|}\right)^{k_1^2 + ... + k_m^2 + \sum_{ (i,j) \in W \cup V_2 } k_ik_j - \sum_{(i,j) \in V_0 \cup V_3} k_ik_j}  \\
&=& o\left((\log x)^{k_1^2 + ... + k_m^2 + \sum_{(i,j) \in W } k_ik_j - \sum_{(i,j) \in V_0} k_ik_j}  \cdot \prod_{(i,j) \in V_2} \frac{1}{|\alpha_i - \alpha_j|^{k_ik_j}}  \right. \nonumber \\
&& \cdot\left. \prod_{(i,j) \in V_3} \frac{1}{|(\alpha_i - \alpha_j) - (\alpha_1 - \alpha_2)|^{-k_ik_j} } \right).\nonumber
\end{eqnarray} 
We start proving the above by showing that 
\begin{equation} \label{eqn:ineqFork}
k_1^2 + ... + k_m^2 + \sum_{(i,j) \in W} k_ik_j - \sum_{(i,j) \in V_0} k_ik_j > 0.
\end{equation}
Here we define a bipartite graph G with $k_1, k_2, ..., k_m, -k_1,..., - k_m$ as its vertices. There is an edge between $k_p$ and $-k_q$ if and only if $(i,j) \in V_0.$ Hence the set of edges of $G$ corresponds to the set $V_0.$ Moreover, let $G$ have $t$ connected components. We claim that $G$ has the following properties:
\begin{enumerate}
\item If $k_i$ and $-k_j$, where $i \neq j$, are in the same component, then $(i,j) \in V_0$, i.e. there is an edge between $k_i$ and $-k_j$.
\item If $k_i$ and $k_j$ (or $-k_i$ and $-k_j$) are in the same component, $|\alpha_i - \alpha_j| = O\big(\frac{1}{\log x}\big),$ i.e. $(i,j) \in W.$
\item $k_i$ and $-k_i$ are not in the same components. 
\item If $k_i, -k_j$ are in the same component, then $-k_i, k_j$ cannot be in the same component. 
\item At least one component in $G$ has only one vertex. 
\end{enumerate}
{\underline{Proof of property (1):}} Since $k_i$ and $-k_j$ are contained in the same components, there are edges $(k_i, -k_{m_1}), (-k_{m_1}, k_{m_2}), (k_{m_2}, -k_{m_3}), ..., (k_{m_{2r}},-k_j)$ connecting $k_i$ to $-k_j.$ This can be intepreted as
$|(\alpha_i - \alpha_{m_1}) - (\alpha_1 - \alpha_2)|, |(\alpha_{m_2} - \alpha_{m_1}) - (\alpha_1 - \alpha_2)|,..., |(\alpha_{m_{2r}} - \alpha_{j}) - (\alpha_1 - \alpha_2)| = O(1/\log x).$ Hence $|\alpha_i - \alpha_{m_2}|, |\alpha_{m_2} - \alpha_{m_4}|, ..., |\alpha_{m_{2(r-1)}} - \alpha_{m_{2r}}| = O(1/\log x).$ This gives that $|\alpha_i - \alpha_{m_{2r}}| = O(1/\log x),$ and we obtain that
$$|(\alpha_i - \alpha_j) - (\alpha_1 - \alpha_2)| \leq |(\alpha_i - \alpha_{m_{2r}})| + |(\alpha_{m_{2r}} - \alpha_{j}) - (\alpha_1 - \alpha_2)| = O(1/\log x). $$ 
This proves the first property. 
\\
\\
{\underline{Proof of property (2):}} We will prove only for a case of $k_i$ and $k_j$ because the same arguments are applied to the proof of the negative sign case. Since $k_i$ and $k_j$ are in the same components, by property (1), there are two edges $(k_i, -k_l)$ and $(-k_l, k_j)$ linking between $k_i$ and $k_j$. This means that $|(\alpha_i - \alpha_l) - (\alpha_1 - \alpha_2)|$ and $|(\alpha_j - \alpha_l) - (\alpha_1 - \alpha_2)| = O(1/\log x).$ Hence $|\alpha_i - \alpha_j| = O(1/\log x).$ 
\\
\\
{\underline{Proof of property (3):}} If $k_i$ and $-k_i$ are contained in the same components, then by the same reasonings as the proof of property (1), we have that
$ |(\alpha_i - \alpha_i) - (\alpha_1 - \alpha_2) | = O(1/\log x),$
which is impossible since $(1, 2) \in \widetilde{W}.$
\\
\\
{\underline{Proof of property (4):}} This follows from Remark \ref{rem:jiandijcannotbeboth}.
\\
\\
{\underline{Proof of property (5):}} Suppose every components had at least two vertices. Then we can find edges $(k_{l_1}, -k_{l_2})$, $(k_{l_2}, -k_{l_3}),(k_{l_3}, - k_{l_4}),..., (k_{l_{r}}, -k_{l_1})$ in $G$ such that $l_i \neq l_j$ for $i \neq j.$ This can be intepreted as  
$$\alpha_{l_{i}} - \alpha_{l_{i + 1}} = \alpha_1 - \alpha_2 + O(1/\log x),$$
for $i = 1,..., r-1$, and
$$ \alpha_{l_{r}} - \alpha_{l_{1}} = \alpha_1 - \alpha_2 + O(1/\log x).$$  
Summing up all equations above, we have
$$ 0 = \alpha_{l_1} - \alpha_{l_1} = r(\alpha_1 - \alpha_2) + O(1/\log x).$$
This contradicts the fact that $(1,2) \in \widetilde{W}.$ 
\\
\\
We are ready to prove (\ref{eqn:ineqFork}). Let $C_1, C_2,...,C_t$ are components of graph $G$. Let $P_j = \frac{1}{2}(\sum_{v_i {\rm \,\, is\,\, a\,\, vertex\,\, in}\,\, C_j } v_i)^2.$ By property (5), at least one of $P_j's$ is  $v_j^2 \geq 1 > 0.$ By property (3), $P_1 + ... + P_t$ contains a term $k_1^2 + ... + k_m^2.$ By definition of $G$ and properties (1) and (4) , the coefficient of $k_ik_j,$ where $(i,j) \in V_0,$ in $P_1 + ... + P_t$ is -1. Finally by property (2), the coefficient of $k_ik_j,$ where $(i,j) \in W $, is 0 or 1. Hence (\ref{eqn:ineqFork}) follows from property (1) - (5) and the fact that 
$$(P_1 + ... + P_t) > \sum_{j: C_j \,\,{\rm \,\,has \,\,at\,\, least\,\, two \,\, vertices}} P_j \geq 0.$$  
\\
Next we define an equivalence relation on $V_3$ as follow: $(i_1,j_1)$ is equivalent to $(i_2, j_2)$ if and only if $0 < \lim_{T \rightarrow \infty} \frac{|(\alpha_{i_1} - \alpha_{j_1}) - (\alpha_1 - \alpha_2)|}{|(\alpha_{i_2} - \alpha_{j_2}) - (\alpha_1 - \alpha_2)|} < \infty.$ Let $V_3$ have $d$ equivalence classes, say $\widetilde{W_1},..., \widetilde{W_d},$ and we let $\{f_1(T), f_2(T),..., f_d(T)\}$ be representatives of each equivalence class. Furthermore, for all $k = 1,.., d-1, \lim_{T \rightarrow \infty} \frac{f_k(T)}{f_{k+1}(T)} = 0.$ Observe that $\frac{1}{\log T} = o(f_{k}(T)),$ and $f_{k}(T) = o(|\alpha_1 - \alpha_2|).$ We will now define a simple graph $G_l$ corresponding to $f_l(T).$ In fact, $G_l$ is defined in a similar way to $G.$ 

$G_l$ has $k_1, k_2, ..., k_m, -k_1,..., - k_m$ as its vertices. There is an edge between $k_p$ and $-k_q$ if and only if $(i,j) \in \widetilde{W}$ and $|(\alpha_i - \alpha_j) - (\alpha_1 - \alpha_2)| = O(f_l(T)).$ Hence the set of edges of $G_l$ corresponds to the set of $V_0 \cup \widetilde{W_1} \cup ... \cup \widetilde{W_l}.$  Moreover, let $G_l$ have $t_l$ connected components. We claim that $G_l$ has the following properties:
\begin{enumerate}
\item If $k_i$ and $-k_j$, where $i \neq j$, are contained in the same components, then $|(\alpha_i - \alpha_j) - (\alpha_1 - \alpha_2)| = O(f_l(T))$, i.e. there is an edge between $k_i$ and $-k_j$.
\item If $k_i$ and $k_j$ (or $-k_i$ and $-k_j$) are contained in the same components, $|\alpha_i - \alpha_j| = O\big(f_l(T)\big).$ 
\item $k_i$ and $-k_i$ are not in the same components. 
\item If $k_i, -k_j$ are in the same components, then $-k_i, k_j$ cannot be in the same components. 
\item At least one component in $G_l$ has only one vertex. 
\end{enumerate}
The proof of properties above of $G_l$ can be shown in the same way as the proof of properties of $G$, and we use the fact that $f_l(T) = o(|\alpha_1 - \alpha_2|).$ From property (1) - (5) of $G_l,$ we can conclude that
\begin{equation} \label{eqn:ineqforGl}
 k_1^2 + ... k_m^2 + \sum_{(i,j) \in W \cup W_l} k_ik_j - \sum_{(i,j) \in V_0 \cup \widetilde{W_1} \cup... \cup \widetilde{W_l} } k_ik_j > 0, 
\end{equation}
where $W_l$ is a subset of $V_2$ such that $|\alpha_i - \alpha_j| = O(f_l(T)).$ 

By (\ref{eqn:ineqFork}) and the fact that $\frac{1}{f_1(T)} = o(\log x),$ we obtain that
\begin{eqnarray*}
 && \left(\frac{1}{f_1(T)}\right)^{k_1^2 + ... k_m^2 + \sum_{(i,j) \in W \cup W_1} k_ik_j - \sum_{(i,j) \in V_0 \cup \widetilde{W_1} } k_ik_j}  \cdot \prod_{(i,j) \in V_2 \backslash W_1} \frac{1}{|\alpha_i - \alpha_j|^{k_ik_j}}   \nonumber \\
&& \cdot \prod_{(i,j) \in V_3 \backslash \widetilde{W_1}} \frac{1}{|(\alpha_i - \alpha_j) - (\alpha_1 - \alpha_2)|^{-k_ik_j} } \\
&=& o\left((\log x)^{k_1^2 + ... + k_m^2 + \sum_{(i,j) \in W} k_ik_j - \sum_{(i,j) \in V_0} k_ik_j}  \cdot \prod_{(i,j) \in V_2} \frac{1}{|\alpha_i - \alpha_j|^{k_ik_j}}  \right. \nonumber \\
&& \cdot\left. \prod_{(i,j) \in V_3} \frac{1}{|(\alpha_i - \alpha_j) - (\alpha_1 - \alpha_2)|^{-k_ik_j} } \right).\nonumber
\end{eqnarray*}
Since $\lim_{T \rightarrow \infty} \frac{f_k(T)}{f_{k+1}(T)} = 0,$ by induction and (\ref{eqn:ineqforGl}), for $l = 1,.., d-1$ we obtain that
\begin{eqnarray*}
 && \left(\frac{1}{f_{l+1}(T)}\right)^{k_1^2 + ... k_m^2 + \sum_{(i,j) \in W \cup W_{l+1}} 2k_ik_j - \sum_{(i,j) \in V_0 \cup \widetilde{W_{1}} \cup ... \cup \widetilde{W_{l+1}} } k_ik_j}  \cdot \prod_{(i,j) \in V_2 \backslash W_{l+1} } \frac{1}{|\alpha_i - \alpha_j|^{k_ik_j}}   \nonumber \\
&& \cdot \prod_{(i,j) \in V_3 \backslash (\widetilde{W_{1}} \cup ... \cup \widetilde{W_{l+1}})} \frac{1}{|(\alpha_i - \alpha_j) - (\alpha_1 - \alpha_2)|^{-k_ik_j} } \\
&=& o\left(\left(\frac{1}{f_l(T)}\right)^{k_1^2 + ... k_m^2 + \sum_{(i,j) \in W \cup W_l} k_ik_j - \sum_{(i,j) \in V_0 \cup \widetilde{W_{1}} \cup ... \cup \widetilde{W_l} } k_ik_j} \cdot  \prod_{(i,j) \in V_2 \backslash W_l} \frac{1}{|\alpha_i - \alpha_j|^{k_ik_j}} \right. \nonumber \\
&& \cdot \left. \prod_{(i,j) \in V_3 \backslash (\widetilde{W_{1}} \cup ... \cup \widetilde{W_l})} \frac{1}{|(\alpha_i - \alpha_j) - (\alpha_1 - \alpha_2)|^{-k_ik_j} } \right).\nonumber
\end{eqnarray*}
Finally since $\frac{1}{|\alpha_1 - \alpha_2|} = o(\frac{1}{f_d(T)}),$ by two equations above, we derive (\ref{eqn:mainsmall}).

\end{document}